\definecolor{pred}{RGB}{148,55,61}
\definecolor{blue}{rgb}{0,0,0.9}
\definecolor{red}{rgb}{0.9,0,0}
\definecolor{green}{rgb}{0,0.9,0}
\newtheorem{theorem}{Theorem}[section]
\newtheorem{proposition}{Proposition}[section]
\newtheorem{assumption}{Assumption}[section]
\newtheorem{lemma}{Lemma}[section]
\def\Re{\mathbb{R}}
\def\S{\mathbb{S}}
\newcommand{\blind}{0}
\begin{document}

\def\spacingset#1{\renewcommand{\baselinestretch}%
{#1}\small\normalsize} \spacingset{1}


\if0\blind
{
\title{\bf An Efficient Linearly Convergent Regularized Proximal Point Algorithm for Fused Multiple Graphical Lasso Problems}
\date{February 19, 2019}
\author{
{Ning Zhang}\thanks{Department of Applied Mathematics, The Hong Kong Polytechnic University, Hung Hom, Hong Kong ({\tt ningzhang\_2008@yeah.net}).}
\quad {Yangjing Zhang}\thanks{({Corresponding author}) Department of Mathematics, National University of Singapore, 10 Lower Kent Ridge Road, Singapore 119076 ({\tt zhangyangjing@u.nus.edu}).}
\quad {Defeng Sun}\thanks{Department of Applied Mathematics, The Hong Kong Polytechnic University, Hung Hom, Hong Kong ({\tt defeng.sun@polyu.edu.hk}). {This author is supported by Hong Kong Research Grant Council grant PolyU153014/18p}.}
\quad{Kim-Chuan Toh}\thanks{Department of Mathematics, and Institute of Operations Research and Analytics, National University of Singapore, 10 Lower Kent Ridge Road, Singapore 119076 ({\tt mattohkc@nus.edu.sg}).
The research of this author is supported in part by the Ministry of Education, Singapore, Academic Research Fund (Grant number:
R-146-000-257-112).
}}
\maketitle
 \fi

\if1\blind
{
  \bigskip
  \bigskip
  \bigskip
  \begin{center}
    {\LARGE\bf An Efficient Linearly Convergent Regularized Proximal Point Algorithm for Fused Multiple Graphical Lasso Problems}
\end{center}
  \medskip
} \fi

\bigskip
\begin{abstract}
Nowadays, analysing data from different classes or over a temporal grid has attracted a great deal of interest.
As a result, various multiple graphical models for learning a collection of graphical models simultaneously have been derived by introducing sparsity in graphs and similarity across multiple graphs. This paper focuses on
the fused multiple graphical Lasso model which encourages not only shared pattern of sparsity, but also shared values of edges across different graphs.
For solving this model, we develop an efficient regularized proximal point algorithm, where the subproblem in each iteration of the algorithm is solved by a superlinearly convergent semismooth Newton method. To implement the semismooth Newton method, we derive an explicit expression for the generalized Jacobian of the proximal mapping of the fused multiple graphical Lasso regularizer. Unlike those widely used first order methods, our approach has heavily exploited the underlying second order information through the semismooth Newton method. This can not only accelerate the convergence of the algorithm, but also improve its robustness. The efficiency and robustness of our proposed algorithm are demonstrated {by comparing} with some state-of-the-art methods on both synthetic and real data sets.  {Supplementary materials for this article are available online.}
\end{abstract}

\noindent%
{\it Keywords:}  Fast linear convergence $\cdot$  Network estimation $\cdot$  Semismooth Newton method $\cdot$ Sparse Jacobian
\vfill

\newpage
\spacingset{1.5} 
\section{Introduction}
Undirected graphical models have been especially popular for learning  {conditional independence structures} among a large number of variables where the observations are drawn independently and identically from the same distribution. The Gaussian graphical model is one of the most widely used undirected graphical  {models}. In the high-dimensional and low-sample-size settings, it is always assumed that the conditional independence structure or the precision matrix is sparse in a certain sense. In other words, its corresponding undirected graph is expected to be sparse. To promote sparsity, there has been a great deal of interest in using the $\ell_1$ norm penalty in statistical applications \citep{banerjee2008model,friedman2008sparse,rothman2008sparse}. In many conventional applications, a single Gaussian graphical model is typically enough to capture the conditional independence structure of the random variables. However, due to the heterogeneity or similarity of the data involved, it is increasingly appealing to fit a collection of such models jointly, such as inferring the time-varying networks and finding the change-points  \citep{ahmed2009recovering,monti2014estimating,gibberd2017regularized,hallac2017network,yang2018estimating} and estimating multiple precision matrices simultaneously for variables from distinct but related classes \citep{guo2011joint,danaher2014joint,yang2015fused}.

Multiple graphical models refer to the models that can estimate a collection of precision matrices jointly. Specifically, let $\Delta^{(l)}$ be $L$ random vectors (from different classes or over a temporal grid) drawn independently from different distributions $\mathcal{N}_p(\mu^{(l)},\Sigma^{(l)}),\,l=1,2,\dots,L,\,L\geq 2$. Assume that the multivariate random variable $\Delta^{(l)}$ has $N_l$ observations $\delta^{(l)}_1,\delta^{(l)}_2,\dots,\delta^{(l)}_{N_l}$, for each $l\in\{1,2,\dots,L\}$. Then the sample means are $ \bar{\mu}^{(l)} = \frac{1}{N_l}\sum_{i=1}^{N_l}\delta^{(l)}_i $ and the sample covariance matrices are $S^{(l)}=\frac{1}{N_l -1}\sum_{i=1}^{N_l}(\delta^{(l)}_i - \bar{\mu}^{(l)} ) (\delta^{(l)}_i - \bar{\mu}^{(l)} )^T$, $l=1,2,\dots,L$. The multiple graphical model for estimating {the} precision matrices $(\Sigma^{(l)})^{-1},\,l=1,2,\dots,L$ jointly is the model with  {the variable} $\Theta=(\Theta^{(1)},\dots,\Theta^{(L)})\in\S^p\times\cdots\times\S^p$:
\begin{equation}\label{model-MGL}
\begin{array}{cl}
\min\limits_{\Theta} & \displaystyle\sum^L_{l=1} \left(-\log \det \,\Theta^{(l)}+\langle S^{(l)},\Theta^{(l)} \rangle \right)+ \mathcal{P}(\Theta),
\end{array}
\end{equation}
where $\mathcal{P}$ is a penalty function, which usually promotes sparsity in each $\Theta^{(l)}$ and similarities among different $\Theta^{(l)}$'s.
Various penalties have been considered in the literature \citep{ahmed2009recovering,guo2011joint,danaher2014joint,monti2014estimating,yang2015fused,gibberd2017regularized}.

In this paper, we focus on the following fused graphical Lasso (FGL) regularizer which was used by \cite{ahmed2009recovering} and \cite{yang2015fused}:
\begin{equation}\label{regularizer-FGL}
\begin{array}{l}
 \mathcal{P}(\Theta) \displaystyle = \lambda_1 \sum^L_{l=1} \sum_{i\neq j}|\Theta^{(l)}_{ij}| + \lambda_2 \sum^{L}_{l=2} \sum_{i\neq j} |{\Theta^{(l)}_{ij}} - {\Theta^{(l-1)}_{ij}}|.
 \end{array}
\end{equation}
We refer to problem \eqref{model-MGL} with the FGL regularizer $\mathcal{P}$  in \eqref{regularizer-FGL} as the FGL problem.
The FGL regularizer  is in some sense a generalized fused Lasso regularizer \citep{tibshirani2005sparsity}. It applies the $\ell_1$ penalty to all the off-diagonal elements of the $L$ precision matrices and the consecutive differences of the elements of successive precision matrices. Many elements with the same indices in the estimated matrices $\Theta^{(1)},\dots,\Theta^{(L)}$ will be close or even identical when the parameter $\lambda_2$ is large enough. Therefore, the FGL regularizer encourages not only shared pattern of sparsity, but also shared values across different graphs.

Existing algorithms for solving the FGL problem are quite limited in the literature. One of the most extensively used algorithms for solving this class of problems is the alternating direction method of multipliers (ADMM) \citep{danaher2014joint,hallac2017network,gibberd2017regularized}.
Besides, a proximal Newton-type method \citep{hsieh2011sparse,lee2014proximal} was implemented by \cite{yang2015fused} for solving the FGL problem.
As we know, ADMM could be a  {practical} first order method for finding approximate solutions of low or moderate accuracy. However, ADMM hardly utilizes any second order information, which generally must be used in order to obtain highly accurate solutions. Although the proximal Newton-type method does incorporate some forms of second order information, a complicated quadratic approximation problem has to be solved in each iteration, and this computation is usually time-consuming. It is worth mentioning that the regularizers are often introduced to promote certain structures
in the estimated precision matrices, and the trade-off between biases and variances in the resulting {estimators} is controlled by the regularization parameters \citep{fan2010selective}. But in practice, it is extremely hard to find the optimal regularization parameters. Therefore, a sequence of regularization parameters is  {applied in practice}, and consequently, a sequence of corresponding optimization problems  {must} be solved \citep{fan2013tuning}. Under such a circumstance, a highly efficient and robust algorithm for solving the FGL model becomes particularly important.

In this paper, we will design a semismooth Newton (SSN) based regularized proximal point algorithm (rPPA) for solving the FGL problem, which is inspired by \cite{li2017efficiently}, where they have  convincingly demonstrated the superior numerical performance of the SSN based augmented Lagrangian method (ALM), known as {\sc Ssnal}, for solving the fused Lasso problem \citep{tibshirani2005sparsity}.
Thanks to the fact that the FGL problem has close connections to the fused Lasso problem, many of the virtues and theoretical insights of the {\sc Ssnal}\, for solving the fused Lasso problem can be observed in our approach. However, we should emphasize that solving the FGL problem is much more challenging than solving the fused Lasso problem.
Specifically, the difficulties are mainly due to the log-determinant function $\log\det\,(\cdot)$ and the matrix  variables,
as described below.
\vskip 1mm
\begin{itemize}[topsep=1pt,itemsep=-.6ex,partopsep=1ex,parsep=1ex,leftmargin=4ex]
\item[(a)] Unlike the simple quadratic functions in the fused
 Lasso problem, the function $\log\det\,(\cdot)$ is defined on the space of positive definite matrices. Therefore, the FGL model requires the positive definiteness of their solutions. This greatly increases  the difficulty and complexity of theoretical analysis and numerical implementation.

\item[(b)] \cite{li2017efficiently} constructed  an efficiently computable element in the generalized Jacobian of the proximal mapping of the fused Lasso regularizer, which is an essential step for solving the fused Lasso problem. Based on the constructions, we could obtain an efficiently computable generalized Jacobian of the proximal mapping of the FGL regularizer. However, this process needs more complicated manipulations of coordinates for a collection of matrix variables, unlike the vector case of the fused Lasso problem.
\end{itemize}
The key issue in the implementation of rPPA for solving the FGL model is the
computation of the solution of the subproblem in each rPPA iteration.
For this purpose, we will design an SSN method to solve those subproblems.
We note that the numerical performance of the SSN method relies
critically on the efficient calculation of the generalized Jacobian of the proximal mapping of the FGL regularizer and that of the log-determinant function. Fortunately, the generalized Jacobian of the proximal mapping of the FGL regularizer can be constructed efficiently based on that of the proximal mapping of the fused Lasso regularizer given by \cite{li2017efficiently}.
As a result, the generalized Jacobian of the proximal mapping of the FGL regularizer would inherit the structured sparsity (referred to as second order sparsity) from that of the fused Lasso regularizer. Due to the structured sparsity, the computation of a matrix-vector product in the SSN method is reasonably cheap and thus the SSN method is quite efficient for solving each subproblem.
To summarize, it can be proven that our rPPA for solving the FGL problem has a linear convergent guarantee, and the convergence rate can be arbitrarily fast by choosing a sufficiently large proximal penalty parameter. Moreover, the SSN method for solving each of rPPA subproblems can be shown to be superlinearly convergent. Thus, based on these excellent convergent properties and the novel exploitation of the second order sparsity, we can expect the SSN based rPPA for solving the FGL problem to be highly efficient.  Indeed, our numerical experiments have confirmed the high
efficiency and robustness of the proposed algorithm for solving the FGL problems accurately.

The remaining parts of this paper are as follows. Section 2 presents some definitions and preliminary results.
In section 3, we present a semismooth Newton based regularized proximal point algorithm for solving the FGL problem and its convergence properties. The numerical performance of our proposed algorithm on time-varying stock prices data sets and categorical text data sets are evaluated in section 4. Section 5 gives the conclusion.

\medskip
\noindent
{\bf Notations.}
$\S^p_+$ ($\S^p_{++}$) denotes the cone of positive semidefinite (definite) matrices in the space of $p\times p$ real symmetric matrices $\S^p$. For any $A,\,B\in \S^p$, we denote $A \succeq B$ $(A\succ B)$ if $A-B \in \S^p_+$ ($A-B \in \S^p_{++}$). In particular, $A \succeq 0$ $(A\succ 0)$ indicates $A \in \S^p_+$ ($A \in \S^p_{++})$.
We let $\mathcal{X} := \S^p_+\times\cdots\times\S^p_+$ and
 $\mathcal{Y} =: \S^p\times\cdots\times\S^p$  {to} be the Cartesian product of $L$ positive semidefinite cones $ \S^p_+ $ and that of $L$ spaces of symmetric matrices $\S^p$, respectively.
$\Re^n$ denotes the $n$-dimensional Euclidean space, and
 $\Re^{m\times n}$ denotes the set of all $m\times n$ real matrices.
For any $x\in\Re^n$, $\|x\|_1 = \sum_{i=1}^{n}|x_i|$, and $\|x\| = \sqrt{\sum_{i=1}^{n}|x_i|^2}$.
We use the {\sc Matlab} notation $[A; B ]$ to denote the matrix obtained by appending $B$ below the last row of $A$, when the number of columns of $A$ and $B$ is identical.
For any matrix $A\in\Re^{m \times n}$, $A_{ij}$ denotes the $(i,j)$-th element of $A$. For any $X := (X^{(1)},\dots,X^{(L)})\in\mathcal{Y}$, $X_{[ij]} :=[X^{(1)}_{ij};\ldots;X^{(L)}_{ij}]\in\Re^L$ denotes the column vector obtained by taking out the $(i,j)$-th elements across all $L$ matrices $X^{(l)},\,l=1,\dots,L$.
${\rm Diag}(D_1,\ldots,D_n)$ denotes the block diagonal matrix whose $i$-th diagonal block are the matrix $D_i$, $i=1,\dots,n$.
$I_n$ denotes the $n\times n$ identity matrix, and $I$ denotes an identity matrix or map when the dimension is clear from the context.
The function composition is denoted by $\circ$, that is, for any functions $f$ and $g$, $(f\circ g)(\cdot) := f(g(\cdot))$. The Hadamard product is denoted by $\odot$.

\section{Preliminaries}

Let $\mathcal{E}$ be a finite-dimensional real Hilbert space, and $\Xi:\, \mathcal{E}\rightarrow{\Re}\cup\{+\infty\}$ be a proper and closed convex function. The Moreau-Yosida regularization \citep{moreau1965proximite,yosida1980functional} of $\Xi$ is defined by
\begin{equation}\label{def-MY}
\begin{array}{l}
\Psi_{\Xi}(u):=\min_{u'}\left\{\Xi(u')+\frac{1}{2}\|u'-u\|^2\right\},\,\,\forall u\in \mathcal{E}.
\end{array}
\end{equation}
The proximal mapping associated with $\Xi$ is the unique minimizer of \eqref{def-MY} defined by
\begin{equation}\label{def-prox}
\begin{array}{l}
{\rm Prox}_{\Xi}(u) := \arg\min_{u'}\left\{\Xi(u')+\frac{1}{2}\|u'-u\|^2\right\},\,\,\forall\,u\in\mathcal{E}.
\end{array}
\end{equation}
Moreover, $\Psi_{\Xi}(\cdot)$ is a continuously differentiable convex function  \citep{lemarechal1997practical,rockafellar2009variational}, and its gradient is given by
\begin{equation}\label{def-grad-MY}
\begin{array}{l}
\nabla \Psi_{\Xi} (u) =u-{\rm Prox}_{\Xi}(u) ,\,\,\forall\,u\in\mathcal{E}.
\end{array}
\end{equation}
For notational convenience, define $\vartheta:\mathbb{S}^p\rightarrow \Re\cup\{+\infty\}$ by
$$
\vartheta(A) =\left\{\begin{array}{ll}
-{\log\det}(A), & \hbox{if $A\in\mathbb{S}^p_{++}$};\\
+\infty, & \hbox{otherwise}.
\end{array}
\right.
$$
 {Let  $\beta > 0$ be given.} Define two scalar functions as follows:
$$
\begin{array}{l}
\phi^+_{\beta}(x):=(\sqrt{x^2+4\beta} + x)/{2},\,\,
\phi^-_{\beta}(x):=(\sqrt{x^2+4\beta} - x)/{2},\,\,\forall\, x\in\Re.
\end{array}
$$
In addition, the matrix counterparts of these two scalar functions can be defined by
$$
\phi^+_{\beta}(A):= Q{\rm Diag}(\phi^+_{\beta}(d_1),\dots,\phi^+_{\beta}(d_p))Q^T,\,\,\,
\phi^-_{\beta}(A):= Q{\rm Diag}(\phi^-_{\beta}(d_1),\dots,\phi^-_{\beta}(d_p))Q^T
$$
for any $A\in \S^p$ with its eigenvalue decomposition $A=Q{\rm Diag}(d_1,d_2,\ldots,d_p)Q^T $, where $d_1\geq d_2\geq\cdots\geq d_p$. It is easy to show that $\phi^+_{\beta}$ and $\phi^-_{\beta}$ are well-defined. Moreover, $\phi^+_{\beta}(A)$ and $\phi^-_{\beta}(A)$ are positive definite for any $A\in\S^p$.
\begin{proposition}\label{phiprime}\cite[Lemma 2.1 (b)]{wang2010solving}
   The function $\phi^+_{\beta}:\,\S^p\to \S^p$ is continuously differentiable, and its directional derivative $(\phi^+_{\beta})'(A)[B]$ at $A$ for any  $B\in\S^p$ is given by
$$ (\phi^+_{\beta})'(A)[B] = Q[\Gamma \odot (Q^T B Q)]Q^T,$$
where $A$ admits the eigenvalue decomposition $A=Q{\rm Diag}(d_1,d_2,\dots,d_p)Q^T,\,d_1\geq d_2\geq\cdots\geq d_p$, and $\Gamma\in\S^p$ is defined by $$\Gamma_{ij} = (\phi^+_{\beta}(d_i)+\phi^+_{\beta}(d_j))/(\sqrt{d_i^2+4\beta}+\sqrt{d_j^2+4\beta})
,\,\,i,j=1,2,\dots,p.$$
\end{proposition}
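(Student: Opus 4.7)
The plan is to treat $\phi^+_{\beta}$ as a primary matrix (spectral) function induced by the scalar map $x\mapsto(\sqrt{x^2+4\beta}+x)/2$ on $\Re$, and to invoke the Daleckii--Krein formula for the Fr\'echet derivative of such a function. The first step is to record that, since $\beta>0$, the scalar function $\phi^+_{\beta}:\Re\to\Re$ is in fact $C^{\infty}$: the radicand $x^2+4\beta$ is strictly positive, so $\sqrt{x^2+4\beta}$ is smooth, with derivative $(\phi^+_{\beta})'(x)=\tfrac{1}{2}\bigl(1+x/\sqrt{x^2+4\beta}\bigr)$. Smoothness on $\Re$ is exactly what is needed to lift $\phi^+_{\beta}$ from a scalar function to a continuously differentiable symmetric matrix function on $\S^p$ (see, e.g., Bhatia's \emph{Matrix Analysis}, Ch.~V, or the cited Lemma~2.1(b) of Wang--Sun--Toh 2010).

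Next I would apply the Daleckii--Krein theorem: if $f:\Re\to\Re$ is $C^{1}$, then the matrix-valued map $A\mapsto f(A):=Q\,{\rm Diag}(f(d_1),\dots,f(d_p))Q^T$ (with $A=Q\,{\rm Diag}(d_1,\dots,d_p)Q^T$) is continuously differentiable on $\S^p$, and its derivative at $A$ applied to any $B\in\S^p$ equals
$$
 f'(A)[B] \;=\; Q\bigl(\Lambda\odot(Q^TBQ)\bigr)Q^T,
$$
where $\Lambda\in\S^p$ is the \emph{divided-difference matrix} with entries $\Lambda_{ij}=f[d_i,d_j]$, defined by $\Lambda_{ij}=(f(d_i)-f(d_j))/(d_i-d_j)$ when $d_i\neq d_j$ and $\Lambda_{ii}=f'(d_i)$. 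The continuous differentiability (and in particular the fact that the right-hand side is independent of the choice of $Q$ when eigenvalues coincide) is the nontrivial content of that theorem; I would simply cite it rather than reprove it.

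The remaining task is purely algebraic: verify that for $f=\phi^+_{\beta}$ the divided differences coincide with the entries $\Gamma_{ij}=(\phi^+_{\beta}(d_i)+\phi^+_{\beta}(d_j))/(\sqrt{d_i^2+4\beta}+\sqrt{d_j^2+4\beta})$ stated in the proposition. For the off-diagonal case $d_i\neq d_j$, I would rationalize the difference $\sqrt{d_i^2+4\beta}-\sqrt{d_j^2+4\beta}=(d_i^2-d_j^2)/(\sqrt{d_i^2+4\beta}+\sqrt{d_j^2+4\beta})$ to get
$$
 f(d_i)-f(d_j)\;=\;\tfrac{1}{2}(d_i-d_j)\;\frac{(d_i+d_j)+\sqrt{d_i^2+4\beta}+\sqrt{d_j^2+4\beta}}{\sqrt{d_i^2+4\beta}+\sqrt{d_j^2+4\beta}},
$$
and then recognize the numerator $(d_i+\sqrt{d_i^2+4\beta})+(d_j+\sqrt{d_j^2+4\beta})=2\phi^+_{\beta}(d_i)+2\phi^+_{\beta}(d_j)$. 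Dividing by $d_i-d_j$ yields exactly $\Gamma_{ij}$. For the diagonal case $d_i=d_j=d$, a direct substitution gives $\Gamma_{ii}=\phi^+_{\beta}(d)/\sqrt{d^2+4\beta}=\tfrac{1}{2}(1+d/\sqrt{d^2+4\beta})=(\phi^+_{\beta})'(d)$, matching $\Lambda_{ii}$ as required.

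The only substantive obstacle in this program is invoking the Daleckii--Krein machinery cleanly: one must know that continuous differentiability of $f$ on $\Re$ (not merely existence of $f'$) is what guarantees both that $f(\cdot)$ on $\S^p$ is $C^1$ and that the divided-difference formula is valid independently of the chosen spectral decomposition. Everything else reduces to the short rationalization above, so the claim follows by combining this standard structural result with the elementary algebraic identity relating $\Gamma_{ij}$ to the divided difference of $\phi^+_{\beta}$.
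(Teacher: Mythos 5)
Your argument is correct: the divided-difference computation does reduce $\Gamma_{ij}$ to $(\phi^+_{\beta}(d_i)-\phi^+_{\beta}(d_j))/(d_i-d_j)$ off the diagonal and to $(\phi^+_{\beta})'(d_i)$ on it, so the Daleckii--Krein (L\"owner operator) formula gives exactly the stated derivative. The paper itself offers no proof---it cites Lemma~2.1(b) of Wang, Sun, and Toh (2010) verbatim---and your route is essentially the same standard spectral-function argument underlying that reference, so there is nothing further to reconcile.
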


\begin{proposition}\cite[Proposition 2.3]{yang2013proximal}\label{prop:logdet}
  For any $A\in\S^p$, it holds that
$
\begin{array}{l}
{\rm Prox}_{\beta \vartheta}(A) = \phi^+_{\beta}(A)\,\,\hbox{and}\,\,
\Psi_{\beta \vartheta}(A)=-\beta\log\det(\phi^+_{\beta}(A))+\frac{1}{2}\|\phi^-_{\beta}(A)\|^2.
\end{array}
$
\end{proposition}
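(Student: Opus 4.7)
The plan is to compute $\mathrm{Prox}_{\beta\vartheta}(A)$ directly from its variational definition as the unique minimizer of $F(X) := -\beta\log\det X + \frac{1}{2}\|X-A\|^2$ over $X \in \mathbb{S}^p_{++}$, and then obtain the Moreau envelope $\Psi_{\beta\vartheta}(A)$ by evaluating $F$ at this optimizer. Existence and uniqueness are immediate: $-\log\det$ is strictly convex and continuous on $\mathbb{S}^p_{++}$ and blows up at the boundary, so $F$ is strictly convex and coercive on $\mathbb{S}^p_{++}$ and therefore admits a unique interior minimizer $X^{\star}$.

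Next, I would apply Fermat's rule at $X^{\star}$. Using $\nabla(-\log\det)(X) = -X^{-1}$ on $\mathbb{S}^p_{++}$, the first-order condition $-\beta (X^{\star})^{-1} + (X^{\star} - A) = 0$ can be rewritten, by multiplying on either side by $X^{\star}$, in the two equivalent forms $(X^{\star})^2 - AX^{\star} = \beta I$ and $(X^{\star})^2 - X^{\star}A = \beta I$. Subtracting these shows $AX^{\star} = X^{\star}A$, so $X^{\star}$ and $A$ are simultaneously diagonalizable in the eigenbasis $Q$ of $A$. Writing $A = Q\,\mathrm{Diag}(d_1,\dots,d_p)\,Q^T$ and $X^{\star} = Q\,\mathrm{Diag}(x_1,\dots,x_p)\,Q^T$, the matrix stationarity equation decouples into the scalar quadratics $x_i^2 - d_i x_i - \beta = 0$, with roots $(d_i \pm \sqrt{d_i^2+4\beta})/2$. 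The admissibility constraint $x_i > 0$ selects only the $+$ root, so $x_i = \phi^+_{\beta}(d_i)$ and hence $X^{\star} = \phi^+_{\beta}(A)$ exactly as claimed.

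For the Moreau envelope, I would substitute $X^{\star} = \phi^+_{\beta}(A)$ back into $F$ to obtain $\Psi_{\beta\vartheta}(A) = -\beta\log\det\phi^+_{\beta}(A) + \frac{1}{2}\|\phi^+_{\beta}(A) - A\|^2$, and then simplify the second term via the elementary scalar identity $\phi^+_{\beta}(d) - d = (\sqrt{d^2+4\beta} - d)/2 = \phi^-_{\beta}(d)$, which by simultaneous diagonalization lifts to the matrix identity $\phi^+_{\beta}(A) - A = \phi^-_{\beta}(A)$. The only real subtlety I anticipate is the commutativity step that reduces the matrix stationarity equation to scalar quadratics; once that reduction is made, the remainder is routine scalar algebra, and the positivity of $\phi^+_{\beta}$ (already noted just before the proposition) automatically guarantees $X^{\star} \in \mathbb{S}^p_{++}$ without any separate feasibility check.
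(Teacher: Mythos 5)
Your derivation is correct. Note, however, that the paper does not prove this proposition at all: it is quoted verbatim from Proposition 2.3 of Yang, Sun, and Toh (2013), so there is no in-paper argument to compare against; what you have written is a complete, self-contained substitute for that citation. Your route --- existence/uniqueness from strict convexity and coercivity, Fermat's rule $-\beta (X^{\star})^{-1}+X^{\star}-A=0$, the commutator trick $AX^{\star}=X^{\star}A$ obtained by multiplying the stationarity equation on the left and on the right by $X^{\star}$, simultaneous diagonalization, and the scalar quadratic $x_i^2-d_ix_i-\beta=0$ with the positive root selected --- is sound, including the handling of repeated eigenvalues (the value $x_i=\phi^+_{\beta}(d_i)$ depends only on $d_i$, so the resulting matrix is $\phi^+_{\beta}(A)$ independently of the chosen common eigenbasis). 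The envelope formula then follows from $\phi^+_{\beta}(d)-d=\phi^-_{\beta}(d)$ as you say. A slightly shorter variant worth knowing: since $\phi^+_{\beta}(d)\,\phi^-_{\beta}(d)=\beta$ and $\phi^+_{\beta}(d)-\phi^-_{\beta}(d)=d$, the candidate $X^{\star}=\phi^+_{\beta}(A)$ satisfies $X^{\star}-\beta(X^{\star})^{-1}=\phi^+_{\beta}(A)-\phi^-_{\beta}(A)=A$ by direct verification, so one can skip the commutativity argument entirely and invoke uniqueness of the minimizer; but your longer derivation has the advantage of showing how the formula is found rather than merely checked.
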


\subsection{Surrogate Generalized Jacobian of ${\rm Prox}_{\mathcal{P}}$}
In this section, we analyse
the proximal mapping of the regularizer $\mathcal{P}$ defined by \eqref{regularizer-FGL}. For any $\Theta\in\mathcal{Y}$, one might observe that the penalty term $\mathcal{P}(\Theta)$ merely penalizes the off-diagonal elements, and it is the same fused Lasso regularizer that acts on each vector $\Theta_{[ij]}\in\Re^L,\,i\neq j$. It holds that $ \mathcal{P}(\Theta)=\sum_{i\neq j} \varphi(\Theta_{[ij]})\,\,\hbox{with}\,\, \varphi(x)=\lambda_1\|x\|_1+\lambda_2\|Bx\|_1,\,\forall\, x\in\Re^L. $
The function $\varphi$ is the fused Lasso regularizer, and
the matrix $B\in \Re^{(L-1) \times L}$ is defined by $Bx = [x_1 - x_2; \dots; x_{L-1} - x_L]$, $\forall\, x\in\Re^L$.
The formula for the generalized Jacobian of $ {\rm Prox}_{\varphi} $ has been derived by \cite{li2017efficiently} and will be used in our subsequent algorithmic design.
Define the surrogate generalized Jacobian $\widehat{\partial}{\rm Prox}_{\mathcal{P}}(X):\mathcal{Y}\rightrightarrows\mathcal{Y}$ of ${\rm Prox}_{\mathcal{P}}$ at $X$ as follows:
\begin{align}\label{jac-prox-p-FGL}
\left\{\begin{array}{l}
\mathcal{W}\in\widehat{\partial}{\rm Prox}_{\mathcal{P}}(X)\hbox{ if and only if there exist $M^{(ij)}\in\widehat{\partial}{\rm Prox}_{\varphi}(X_{[ij]})$, $ i < j$}\\
\hbox{such that}\ (\mathcal{W}[Y])_{[ij]}=\left\{\begin{array}{ll}
M^{(ij)}Y_{[ij]}, & \hbox{if $ i < j$},\\
Y_{[ii]}, & \hbox{if $i = j$},\\
M^{(ji)}Y_{[ij]}, & \hbox{if $ j < i$},
\end{array}
\right.\,\,i,j=1,\ldots,p,\,\,\,\forall\, Y\in\mathcal{Y},
\end{array}
\right.
\end{align}
where $\widehat{\partial}{\rm Prox}_{\varphi}(\cdot)$ is the surrogate generalized Jacobian of ${\rm Prox}_{\varphi}$ \cite[Equation 22]{li2017efficiently} and will be given in the supplementary materials. From Theorem 1 by \cite{li2017efficiently}, one can  obtain the following theorem, which justifies why $\widehat{\partial}{\rm Prox}_{\mathcal{P}}(X)$ in \eqref{jac-prox-p-FGL} can be used as the surrogate generalized Jacobian of ${\rm Prox}_{\mathcal{P}}$ at $X$.
\begin{theorem}\label{thm-semismooth-FGL}
The surrogate generalized Jacobian $\widehat{\partial}{\rm Prox}_{\mathcal{P}}(\cdot)$ defined in \eqref{jac-prox-p-FGL} is a nonempty compact valued, upper semicontinuous multifunction.
Given any $ X\in\mathcal{Y}$,
any element in the set $\widehat{\partial}{\rm Prox}_{\mathcal{P}}(X)$ is self-adjoint and positive semidefinite.
Moreover, there exists a neighborhood $\mathcal{U}_X $ of $X$ such that for all $Y \in \mathcal{U}_X$,
\begin{equation*}
{\rm Prox}_{\mathcal{P}}(Y) - {\rm Prox}_{\mathcal{P}}(X) - \mathcal{W}[Y-X] =0,\,\,\forall\,\mathcal{W}\in\widehat{\partial}{\rm Prox}_{\mathcal{P}}(Y).
\end{equation*}
\end{theorem}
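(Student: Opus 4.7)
The plan is to reduce the theorem to the analogous vector-case result (Theorem 1 of \cite{li2017efficiently}) by exploiting the separability $\mathcal{P}(\Theta) = \sum_{i \neq j} \varphi(\Theta_{[ij]})$ together with the fact that $\mathcal{P}$ is insensitive to the diagonal fibers $\Theta_{[ii]}$. Since the Frobenius-type inner product on $\mathcal{Y}$ decouples across the fibers indexed by $(i,j)$, the Moreau proximal subproblem splits, giving $({\rm Prox}_{\mathcal{P}}(X))_{[ij]} = {\rm Prox}_{\varphi}(X_{[ij]})$ for $i \neq j$ and $({\rm Prox}_{\mathcal{P}}(X))_{[ii]} = X_{[ii]}$. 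The construction in \eqref{jac-prox-p-FGL} is precisely the natural block-diagonal lift of elements of $\widehat{\partial}{\rm Prox}_{\varphi}$ along these fibers, with the identity on the diagonal fibers and a single $M^{(ij)}$ shared by the $(i,j)$ and $(j,i)$ fibers to respect the symmetry constraint $Y_{[ij]} = Y_{[ji]}$.

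For the multifunction claim, I would first recall that $\widehat{\partial}{\rm Prox}_{\varphi}$ is itself nonempty, compact valued, and upper semicontinuous. Nonemptiness and compactness of $\widehat{\partial}{\rm Prox}_{\mathcal{P}}(X)$ then follow from taking a finite Cartesian product of nonempty compact sets (indexed by pairs $i < j$). Upper semicontinuity at $X$ is obtained by composing the continuous linear extraction map $X \mapsto X_{[ij]}$ with $\widehat{\partial}{\rm Prox}_{\varphi}$ and noting that finite products preserve upper semicontinuity. For self-adjointness and positive semidefiniteness, the key computation is
$$\langle \mathcal{W}[Y], Z \rangle = 2 \sum_{i < j} \langle M^{(ij)} Y_{[ij]}, Z_{[ij]} \rangle + \sum_{i} \langle Y_{[ii]}, Z_{[ii]} \rangle,$$
where the factor $2$ arises from combining the $(i,j)$ and $(j,i)$ contributions using $Y_{[ij]} = Y_{[ji]}$. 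Self-adjointness and positive semidefiniteness of each $M^{(ij)}$, known from the vector case, transfer directly to $\mathcal{W}$ through this formula (setting $Z = Y$ gives a sum of nonnegative quadratic forms).

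For the piecewise-affine identity, invoking the vector-case theorem at each $X_{[ij]}$ with $i < j$ yields a neighborhood $\mathcal{U}^{(ij)} \subset \Re^L$ on which ${\rm Prox}_{\varphi}(y) - {\rm Prox}_{\varphi}(X_{[ij]}) - M(y - X_{[ij]}) = 0$ holds for every $M \in \widehat{\partial}{\rm Prox}_{\varphi}(y)$. Pulling each $\mathcal{U}^{(ij)}$ back through the continuous extraction map to an open subset of $\mathcal{Y}$ and intersecting the finitely many resulting open sets defines the required $\mathcal{U}_X$. On $\mathcal{U}_X$, the global identity is checked fiber by fiber: the $i < j$ (and by symmetry the $i > j$) fibers reduce to the vector identity just quoted, while the diagonal fibers $i = j$ are trivial because both ${\rm Prox}_{\mathcal{P}}$ and every $\mathcal{W}$ act as the identity there.

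The principal obstacle is the bookkeeping imposed by the symmetry $Y_{[ij]} = Y_{[ji]}$: one must verify that $\mathcal{W}$ sends $\mathcal{Y}$ into $\mathcal{Y}$ (so the output is again a tuple of symmetric matrices), and that self-adjointness is established with respect to the ambient Frobenius inner product on $\mathcal{Y}$ rather than some ``upper-triangular'' variant that counts each off-diagonal entry only once. The shared-$M^{(ij)}$ convention in \eqref{jac-prox-p-FGL} is exactly what makes both checks work, after which every remaining step is a routine transfer from the vector fused Lasso setting.
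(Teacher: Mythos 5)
Your proposal is correct and follows essentially the same route as the paper, which justifies the theorem in a single sentence by reducing it fiber-wise to Theorem 1 of \cite{li2017efficiently}; your write-up simply fills in the details of that reduction (the fiber decomposition of ${\rm Prox}_{\mathcal{P}}$, the product argument for nonemptiness, compactness and upper semicontinuity, the factor-of-two inner-product computation for self-adjointness and positive semidefiniteness, and the finite intersection of neighborhoods for the local affine identity). The symmetry bookkeeping you flag is handled exactly as you describe by the shared-$M^{(ij)}$ convention in \eqref{jac-prox-p-FGL}.
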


\subsection{Lipschitz Continuity of the Solution Mapping }
By introducing an auxiliary variable $ \Omega =(\Omega^{(1)},\ldots,\Omega^{(L)} ) \in\mathcal{Y} $, we can rewrite problem \eqref{model-MGL} equivalently as
\begin{equation}\label{model-MGL-2}
\min\limits_{\Theta,\,\Omega} \Big\{f(\Theta,\Omega):=\sum^L_{l=1} \left(\vartheta(\Omega^{(l)})+\langle S^{(l)},\Theta^{(l)} \rangle \right)+ \mathcal{P}(\Theta)\,\big|\, \Theta -\Omega =0 \Big\}.
\end{equation}
The Lagrangian function of the above problem is given by
\begin{equation*}
\mathcal{L}(\Theta,\Omega,X)=f(\Theta,\Omega) - \langle \Theta-\Omega,\,X\rangle,\,\,\forall\,(\Theta,\Omega,X)\in \mathcal{X}\times\mathcal{X}\times\mathcal{Y}.
\end{equation*}
The dual problem of \eqref{model-MGL-2} takes the following form \cite[Theorem 3.3.5]{borwein2010convex}:
\begin{equation}\label{model-MGL-dual}
\begin{array}{cl}
\max\limits_{X} & \displaystyle\sum^L_{l=1} \left(-\vartheta(X^{(l)})+ p\right)- \mathcal{P}^*(X-S).
\end{array}
\end{equation}
The Karush-Kuhn-Tucker (KKT) optimality conditions \citep{han2018linear}
for \eqref{model-MGL-2} are given as follows:
\begin{equation}\label{def-KKT-MGL}
\Theta-{\rm Prox}_{\mathcal{P}}(\Theta+ X-S)=0,\,\,
\Omega^{(l)}-{\rm Prox}_{\vartheta}(\Omega^{(l)}-X^{(l)})=0,\,\,l=1,\ldots,L,\,\,\Theta - \Omega=0.
\end{equation}
We make the following assumption on the existence of solutions to the KKT system.
\begin{assumption}\label{assum}
The solution set to the KKT system \eqref{def-KKT-MGL} is nonempty.
\end{assumption}
Define an operator $\mathcal{T}_{\mathcal{L}}$ by
$
\mathcal{T}_{\mathcal{L}}(\Theta,\Omega,X):=\{(\Theta',\Omega',X')\,|\,(\Theta',\Omega',-X')\in \partial\mathcal{L}(\Theta,\Omega,X) \}.
$
Since the function $\vartheta(\cdot)$ is strictly convex, under Assumption \ref{assum}, we can see that the KKT system \eqref{def-KKT-MGL} has a unique KKT point, denoted by $(\overline{\Theta},\overline{\Omega},\overline{X})$, and $\mathcal{T}^{-1}_{\mathcal{L}}(0)=\{(\overline{\Theta},\overline{\Omega},\overline{X})\}$.
\begin{proposition}\label{prof:metric-subreg}
There exists a nonnegative scalar $\kappa$ such that for some $\varrho>0$ it holds that
$
\|(\Theta,\Omega,X)-(\overline{\Theta},\overline{\Omega},\overline{X})\|\leq \kappa \|\Delta\|,\,\,\forall\, \Delta\in\mathcal{T}_{\mathcal{L}}((\Theta,\Omega,X))\,\,\hbox{and}\,\, \|\Delta\|\leq \varrho.
$
\end{proposition}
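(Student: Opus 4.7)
The plan is to verify calmness of $\mathcal{T}_{\mathcal{L}}^{-1}$ at the origin for the unique KKT point $(\overline{\Theta},\overline{\Omega},\overline{X})$, routing through the natural KKT residual and exploiting two structural features: (i) $\vartheta$ is $C^{\infty}$ and locally strongly convex around $\overline{\Omega}\succ 0$, and (ii) $\mathcal{P}$ is a polyhedral convex function (a finite sum of absolute values of affine functions of the entries of $\Theta$), so $\partial\mathcal{P}$ is a polyhedral multifunction in the sense of Robinson.

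First I would translate $\Delta=(\Delta_{1},\Delta_{2},\Delta_{3})\in\mathcal{T}_{\mathcal{L}}(\Theta,\Omega,X)$ into the perturbed proximal equations $\Theta={\rm Prox}_{\mathcal{P}}(\Theta+X-S-\Delta_{1})$, $\Omega={\rm Prox}_{\vartheta}(\Omega-X-\Delta_{2})$, and $\Delta_{3}=\Theta-\Omega$. Comparing with \eqref{def-KKT-MGL} and using nonexpansiveness of proximal mappings then yields a bound $\|R(\Theta,\Omega,X)\|\leq C\|\Delta\|$, where $R$ is the natural KKT residual read off from \eqref{def-KKT-MGL}. So the proposition reduces to establishing a local error bound $\|(\Theta,\Omega,X)-(\overline{\Theta},\overline{\Omega},\overline{X})\|\leq\kappa'\|R(\Theta,\Omega,X)\|$ on a neighborhood of the solution.

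That error bound is the decisive step and the main obstacle. I would decompose it block by block. For $\Omega$ close to $\overline{\Omega}\succ 0$, the Hessian $\nabla^{2}\vartheta(\Omega)[H]=\Omega^{-1}H\Omega^{-1}$ is uniformly bounded and positive definite on a compact neighborhood, so $\nabla\vartheta$ has a locally Lipschitz inverse; a small $\Omega$-residual combined with smallness of $X-\overline{X}$ therefore forces $\Omega$ close to $\overline{\Omega}$. For the polyhedral piece, Robinson's theorem for polyhedral multifunctions delivers a calmness modulus controlling the distance from $\Theta$ to $(\partial\mathcal{P})^{-1}(X-S)$ by the residual in the $\Theta$-equation, and the feasibility residual $\Theta-\Omega$ ties the $\Theta$- and $\Omega$-blocks together. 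Concatenating these three bounds, in the spirit of the KKT error-bound theory for (smooth strongly convex) $+$ (polyhedral) composite programs developed in \cite{han2018linear}, yields the required $\kappa'$ on a neighborhood. Setting $\varrho$ small enough that $\|\Delta\|\leq\varrho$ confines $(\Theta,\Omega,X)$ to that neighborhood and then putting $\kappa=C\kappa'$ concludes the argument.

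The hardest part is this second step, because $\vartheta$ is neither globally smooth nor globally strongly convex: both properties are only available on the open cone $\S^{p}_{++}$. I would therefore first use the explicit formula ${\rm Prox}_{\vartheta}=\phi^{+}_{1}$ from Proposition~\ref{prop:logdet} together with nonexpansiveness to guarantee, for $\|\Delta\|$ small, that $\Omega$ lies in a compact neighborhood of $\overline{\Omega}\succ 0$ on which the smoothness and strong-convexity constants of $\vartheta$ can be taken uniform. Only after this localization can the quadratic-growth / strong-monotonicity argument for the $\Omega$-block be coupled cleanly with the polyhedral calmness bound for the $(\Theta,X)$-block to produce a single constant $\kappa$.
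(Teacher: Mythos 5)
Your overall route is, in substance, the paper's own proof unpacked. The paper's argument is three sentences of citations: $\mathcal{P}$ is positively homogeneous, hence $\mathcal{P}^*$ is the indicator of a convex polyhedron, and then Theorem 2.7 of Li, Sun, and Toh (2018a) and Proposition 6 of Cui, Sun, and Toh (2018) are invoked. Those cited results are themselves proved by exactly the combination you describe: Robinson's calmness of polyhedral multifunctions for the piecewise linear block, local smoothness and strong convexity of $\vartheta$ on $\S^p_{++}$ for the log-determinant block, glued together in the style of Han, Sun, and Zhang (2018). Your reduction to the natural residual via nonexpansiveness of the proximal mappings is also correct.

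The genuine gap is the localization step. In the form required by Theorem 2 of Rockafellar (1976), the proposition places no a priori restriction on $(\Theta,\Omega,X)$ --- only $\|\Delta\|\leq\varrho$ is assumed --- so the sentence ``setting $\varrho$ small enough that $\|\Delta\|\leq\varrho$ confines $(\Theta,\Omega,X)$ to that neighborhood'' asserts precisely what must be proved, and the mechanism you offer (nonexpansiveness of $\mathrm{Prox}_{\vartheta}=\phi^{+}_{1}$) does not deliver it: the inequality $\|\Omega-\mathrm{Prox}_{\vartheta}(\Omega-X)\|\leq\|\Delta_{2}\|$ controls a residual, not the location of $\Omega$. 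What does work is the following. Since $\mathcal{P}$ is finite-valued and piecewise linear, $\partial\mathcal{P}(\Theta)\subseteq\partial\mathcal{P}(0)$ is a fixed bounded polyhedron, so the $\Theta$-inclusion confines $X$ to a fixed compact set; one must then rule out $X$ approaching a singular matrix (otherwise $\Omega^{(l)}=(X^{(l)}-\Delta_{2}^{(l)})^{-1}$, and with it $\Theta$, blows up), for instance by an approximate-saddle-point or duality argument showing that every accumulation point of $X$ as $\|\Delta\|\to0$ must be the unique dual solution $\overline{X}\succ0$. Only after this can the compact neighborhood of $\overline{\Omega}$, and the uniform bounds on $\nabla^{2}\vartheta$ there, be legitimately invoked and coupled with the polyhedral calmness bound. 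This compactness argument is exactly the content hidden inside the paper's citations; the gap is fixable, but as written your proof assumes the conclusion of its hardest step.
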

\begin{proof}
Note that the regularizer $\mathcal{P}$ defined by \eqref{regularizer-FGL} is a positive homogeneous function. Therefore, it follows from Example 11.4(a) by \cite{rockafellar2009variational} that the conjugate function $\mathcal{P}^*$ is an indicator function of a nonempty convex polyhedral set. This, together with Theorem 2.7 by \cite{li2018highly} and Proposition 6 by \cite{cui2018r},  {proves  the required result}.
\end{proof}

\section{Regularized Proximal Point Algorithm}
In this section, we present a regularized proximal point algorithm (rPPA) for solving the problem \eqref{model-MGL-2} with the FGL regularizer defined by \eqref{regularizer-FGL}.
Given a sequence of positive scalars $\sigma_{k}\uparrow\sigma_{\infty}\leq \infty$, the $k$-th iteration of
PPA for solving \eqref{model-MGL-2} is given by
\begin{equation}\label{alg-PPA0}
\begin{array}{l}
(\Theta^{k+1},\Omega^{k+1})\approx\arg\underset{\Theta,\,\Omega}{\min} \Big\{f(\Theta,\Omega)+ \frac{1}{2\sigma_k}(\|\Theta- \Theta^k\|^2
+ \|\Omega - \Omega^k\|^2)\,|\,\Theta-\Omega=0\Big\},
\end{array}
\end{equation}
where $k\geq 0$ and $f$ is the objective function of the problem \eqref{model-MGL-2}.

There are many ways to solve \eqref{alg-PPA0}. Inspired by recent progresses in solving  large scale convex optimization problems \citep{yang2013proximal,li2018highly,li2017efficiently,Zhang2018efficient}, we shall adopt the approach of solving \eqref{alg-PPA0} via employing a sparse SSN method to its dual.
The dual of \eqref{alg-PPA0} takes the following form:
$$
\begin{array}{l}
\sup\limits_{X}\Big\{ \Phi_k(X):=\inf\limits_{\Theta,\Omega} \big\{\mathcal{L}(\Theta,\Omega,X) + \frac{1}{2\sigma_k}(\|\Theta- \Theta^k\|^2
+ \| \Omega - \Omega^k\|^2)\big\}\Big\}.
\end{array}
$$
By the definition of the Moreau-Yosida regularization \eqref{def-MY}, we can write $\Phi_k(\cdot)$ explicitly as follows:
$$
\begin{array}{l}
\quad \Phi_k(X)\\
= \inf\limits_{\Theta}
\big\{ \mathcal{P}(\Theta) + \langle \Theta, S - X \rangle
+ \frac{1}{2\sigma_k} \|\Theta - {\Theta}^k \|^2 \big\} \\[2mm]
~~+{\sum^L_{l=1} }\inf_{\Omega^{(l)}}
\big\{\vartheta(\Omega^{(l)}) + \langle \Omega^{(l)},X^{(l)} \rangle
+ \frac{1}{2\sigma_k} \| \Omega^{(l)}- ({\Omega}^{(l)})^k \|^2 \big\}\\[2mm]
=\frac{1}{\sigma_k}\Psi_{\sigma_k\mathcal{P}}({\Theta}^k + \sigma_k(X-S))+\sum^L_{l=1} \frac{1}{\sigma_k} \Psi_{\sigma_k\vartheta}(({\Omega}^k)^{(l)} - \sigma_k X^{(l)}) \\[2mm]
~~- \frac{1}{2\sigma_k}\|{\Theta}^k + \sigma_k(X-S)\|^2 + \frac{1}{2\sigma_k}\|{\Theta}^k\|^2- {\sum^L_{l=1}}\big(\frac{1}{2\sigma_k} \| ({\Omega}^k)^{(l)} -\sigma_k X^{(l)} \|^2 - \frac{1}{2\sigma_k}\| ({\Omega}^{(l)})^k\|^2\big).
\end{array}
$$
Therefore, by Proposition \ref{prop:logdet} and the definition of the proximal mapping \eqref{def-prox}, the $k$-th iteration of PPA \eqref{alg-PPA0} can be written as
$$
\left\{
\begin{array}{l}
 \Theta^{k+1} = {\rm Prox}_{\sigma_k \mathcal{P}}(\Theta^k + \sigma_k (X^{k+1}-S)),\\
 (\Omega^{(l)})^{k+1} = {\rm Prox}_{\sigma_k\vartheta}\big((\Omega^{(l)})^{k} - \sigma_k(X^{(l)})^{k+1}\big),\,\,l = 1,2,\dots,L,
 \end{array}
\right.
$$
where $X^{k+1}$ approximately solves the following problem:
$
\begin{array}{l}
X^{k+1} \approx \arg\max\limits_{X}~\Phi_k(X).
\end{array}
$
Since $\Phi_k(\cdot)$ is not strongly concave in general,
we consider the following rPPA.
\begin{algorithm}[H]
\caption{A regularized proximal point algorithm (rPPA) for solving \eqref{model-MGL-2}}\label{alg-ppa}
Choose $\Theta^0\in\mathcal{X},\,\Omega^0\in \mathcal{X}$. Iterate the following steps for $k=0,1,2,\dots$:
\begin{itemize}[topsep=1pt,itemsep=-.6ex,partopsep=1ex,parsep=0.5ex,leftmargin=8.5ex]
\item[{\bf Step 1.}] Compute
\begin{equation}\label{PPA-sub}
\begin{array}{c}
X^{k+1} \approx \arg\max\limits_{X}~\Big\{\widehat{\Phi}_k(X):=\Phi_k(X)-\frac{1}{2\sigma_k}\|X-X^k\|^2\Big\}.
\end{array}
\end{equation}
 \item[{\bf Step 2.}] Compute $\Theta^{k+1} = {\rm Prox}_{\sigma_k \mathcal{P}}(\Theta^k + \sigma_k (X^{k+1}-S))$ and for $l = 1,\dots,L$,
 $$
 (\Omega^{(l)})^{k+1} = {\rm Prox}_{\sigma_k\vartheta}\big((\Omega^{(l)})^{k} - \sigma_k(X^{(l)})^{k+1}\big)= \phi_{\sigma_k}^+\big((\Omega^{(l)})^{k} - \sigma_k(X^{(l)})^{k+1}\big).
 $$
 \item[{\bf Step 3.}]Update $\sigma_{k+1}\uparrow\sigma_{\infty} \leq \infty$.
\end{itemize}
\end{algorithm}	
Since in practice the inner subproblem \eqref{PPA-sub} can only be solved inexactly,
we will use the following standard stopping criteria studied by \cite{rockafellar1976monotone}:
$$
\begin{array}{rl}
{\rm (A)} & \|\nabla\widehat{\Phi}_k(X^{k+1})\|
\;\leq\; \varepsilon_k/\sigma_k,\,\varepsilon_k\geq 0,\,\sum_{k=0}^{\infty}\varepsilon_k<\infty;
\\[7pt]
{\rm (B)} & \|\nabla \widehat{\Phi}_k(X^{k+1})\|
\;\leq\; (\delta_k /\sigma_k)\|(\Theta^{k+1},\Omega^{k+1}) - (\Theta^{k},\Omega^k)\|,\,\, \delta_k\geq 0,\sum_{k=0}^{\infty}\delta_k<\infty.
\end{array}
$$
The reason for using the above stopping criteria is due to the fact that Algorithm \ref{alg-ppa} is equivalent to the primal-dual PPA in the sense of \cite{rockafellar1976monotone}. Moreover, we have the following convergence results.
\begin{theorem}
Let $\{ (\Theta^{k},\Omega^{k},X^{k}) \}$ be an infinite sequence generated by Algorithm \ref{alg-ppa} under stopping criterion {\rm (A)}. Then the sequence $\{(\Theta^k,\Omega^k)\}$ converges to the unique solution $(\overline{\Theta},\overline{\Omega})$ of \eqref{model-MGL-2}, and the sequence $ \{X^k\} $ converges to the unique solution $\overline{X}$ of \eqref{model-MGL-dual}.
Furthermore, if the criterion ${\rm (B)}$ is also executed in Algorithm \ref{alg-ppa}, there exists $\bar{k}\geq 0$ such that for all $k\geq \bar{k}$,
\begin{equation*}
\|(\Theta^{k+1},\Omega^{k+1},X^{k+1})-(\overline{\Theta},\overline{\Omega},\overline{X})\|\leq \mu_k\|(\Theta^{k},\Omega^{k},X^{k})-(\overline{\Theta},\overline{\Omega},\overline{X})\|,
\end{equation*}
where the convergence rate
$$
1>\mu_k:=[\kappa(\kappa^2+\sigma^2_k)^{-1/2}+\delta_k]/(1-\delta_k)\rightarrow \mu_{\infty}=\kappa(\kappa^2+\sigma^2_{\infty})^{-1/2}\,\,(\mu_{\infty}=0\,\hbox{if}\,\,\sigma_{\infty}=\infty)
$$
and the parameter $\kappa$ is from Proposition \ref{prof:metric-subreg}.
\end{theorem}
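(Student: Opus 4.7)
The plan is to recognize Algorithm~\ref{alg-ppa} as the inexact primal--dual proximal point algorithm applied to the maximal monotone operator $\mathcal{T}_{\mathcal{L}}$, and then invoke the classical convergence theory of \cite{rockafellar1976monotone}: global convergence under criterion~(A), and a local linear rate under criterion~(B) combined with the error bound of Proposition~\ref{prof:metric-subreg}. First I would verify that the outer loop is exactly the inexact proximal iteration $z^{k+1}\approx (I+\sigma_k\mathcal{T}_{\mathcal{L}})^{-1}(z^k)$ for $z^k=(\Theta^k,\Omega^k,X^k)$. The KKT conditions of the proximally regularized saddle problem
\begin{equation*}
\min_{\Theta,\Omega}\max_X\Bigl\{\mathcal{L}(\Theta,\Omega,X)+\tfrac{1}{2\sigma_k}\|(\Theta,\Omega)-(\Theta^k,\Omega^k)\|^2-\tfrac{1}{2\sigma_k}\|X-X^k\|^2\Bigr\}
\end{equation*}
are exactly $0\in \mathcal{T}_{\mathcal{L}}(z^{k+1})+\sigma_k^{-1}(z^{k+1}-z^k)$, and partial minimization over $(\Theta,\Omega)$ reduces this to $\max_X\widehat{\Phi}_k(X)$, after which the primal updates of Step~2 are forced by the proximal formulas in Proposition~\ref{prop:logdet} and \eqref{def-prox}.

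The second step is to relate the computable residual $\|\nabla\widehat{\Phi}_k(X^{k+1})\|$ to the abstract proximal residual $\|z^{k+1}-(I+\sigma_k\mathcal{T}_{\mathcal{L}})^{-1}(z^k)\|$. Differentiating the closed form of $\Phi_k$ via \eqref{def-grad-MY} and Proposition~\ref{prop:logdet}, and using that Step~2 sets $\Theta^{k+1}={\rm Prox}_{\sigma_k\mathcal{P}}(\Theta^k+\sigma_k(X^{k+1}-S))$ and $\Omega^{k+1}={\rm Prox}_{\sigma_k\vartheta}(\Omega^k-\sigma_k X^{k+1})$, I would obtain
\begin{equation*}
\nabla\widehat{\Phi}_k(X^{k+1})=\Omega^{k+1}-\Theta^{k+1}-\sigma_k^{-1}(X^{k+1}-X^k).
\end{equation*}
Because Step~2 eliminates the primal part of the KKT residual exactly by construction, the full residual of $\mathcal{T}_{\mathcal{L}}$ at $z^{k+1}$ is controlled solely by $\sigma_k\|\nabla\widehat{\Phi}_k(X^{k+1})\|$. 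Consequently criterion~(A) becomes Rockafellar's summable absolute-error criterion and criterion~(B) his relative-error criterion, the latter after using firm nonexpansiveness of ${\rm Prox}_{\sigma_k\mathcal{P}}$ and ${\rm Prox}_{\sigma_k\vartheta}$ to bound $\|z^{k+1}-z^k\|$ in terms of $\|(\Theta^{k+1},\Omega^{k+1})-(\Theta^k,\Omega^k)\|$ and $\|\nabla\widehat{\Phi}_k(X^{k+1})\|$.

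With these translations in hand, Theorem~1 of \cite{rockafellar1976monotone} yields the first conclusion: by strict convexity of $\vartheta$ and Assumption~\ref{assum}, $\mathcal{T}_{\mathcal{L}}^{-1}(0)=\{\bar z\}$ is a singleton, and the inexact PPA iterates converge (strongly, since we are in finite dimension) to $\bar z=(\overline{\Theta},\overline{\Omega},\overline{X})$. For the linear rate, I would feed the calmness estimate of Proposition~\ref{prof:metric-subreg} into Theorem~2 of \cite{rockafellar1976monotone} (equivalently, the Luque-type analysis used in \cite{li2018highly,Zhang2018efficient}) to obtain, for all $k$ large enough that $z^k$ lies in the region of validity of the error bound,
\begin{equation*}
\|z^{k+1}-\bar z\|\le\frac{1}{1-\delta_k}\Bigl(\frac{\kappa}{\sqrt{\kappa^2+\sigma_k^2}}+\delta_k\Bigr)\|z^k-\bar z\|,
\end{equation*}
which is precisely the claimed $\mu_k$; monotonicity of $\sigma_k$ and summability of $\delta_k$ then force $\mu_k\to\kappa/\sqrt{\kappa^2+\sigma_\infty^2}<1$ (with the convention $\mu_\infty=0$ when $\sigma_\infty=\infty$).

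The main obstacle is the second step above: making rigorous that the practically checkable tests (A) and (B), phrased in terms of $\nabla\widehat{\Phi}_k(X^{k+1})$, imply Rockafellar's abstract criteria, which involve the distance to the unknown true proximal point $(I+\sigma_k\mathcal{T}_{\mathcal{L}})^{-1}(z^k)$. Once this bookkeeping is complete via the explicit gradient identity and the firm nonexpansiveness of the primal proximal maps, both conclusions reduce to citing \cite{rockafellar1976monotone} with the constant $\kappa$ supplied by Proposition~\ref{prof:metric-subreg}.
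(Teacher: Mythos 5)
Your proposal is correct and follows essentially the same route as the paper: the paper's proof simply invokes Theorem 1 of Rockafellar (1976) together with the uniqueness of the KKT point for global convergence, and Theorem 2 of Rockafellar (1976) combined with Proposition \ref{prof:metric-subreg} for the linear rate, relying (as you do) on the stated equivalence of Algorithm \ref{alg-ppa} with the primal--dual PPA. Your additional bookkeeping — the gradient identity $\nabla\widehat{\Phi}_k(X^{k+1})=\Omega^{k+1}-\Theta^{k+1}-\sigma_k^{-1}(X^{k+1}-X^k)$ and the translation of criteria (A)/(B) into Rockafellar's abstract error criteria — is exactly the detail the paper leaves implicit, and it is carried out correctly.
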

\begin{proof}
The global convergence of Algorithm \ref{alg-ppa} can be obtained from  {Theorem 1} by \cite{rockafellar1976monotone} and the uniqueness of the KKT point.
The linear rate of convergence can be derived from Proposition \ref{prof:metric-subreg} and Theorem 2 by \cite{rockafellar1976monotone}.
\end{proof}

\subsection{Semismooth Newton Method for Solving Subproblem \eqref{PPA-sub}}
From \eqref{def-grad-MY}, Proposition \ref{prop:logdet}, and Theorem 31.5 by \cite{rockafellar2015convex}, we know that $\widehat{\Phi}_k$ is a continuously differentiable, strongly concave function and
$$
\nabla{\Phi_k}(X) = - {\rm Prox}_{\sigma_k \mathcal{P}}\big(U_k(X)\big)+\big(
\phi_{\sigma_k}^+(W^{(1)}_k(X)), \ldots, \phi_{\sigma_k}^+(W^{(L)}_{k}(X)) \big),
$$
where $U_k(X):={\Theta}^k + \sigma_k(X-S)$ and $W^{(l)}_k(X):=(\Omega^{k})^{(l)} - \sigma_kX^{(l)}$,\,\,$l=1,2,\ldots,L$.
Therefore, one can obtain the unique solution to  problem \eqref{PPA-sub} by solving the nonsmooth system
\begin{equation} \label{newton-equation}
\nabla\widehat{\Phi}_k(X)=\nabla\Phi_k(X)-(X-X^k)/\sigma_k= 0.
\end{equation}
Recall that $\phi_{\sigma_k}^+(\cdot)$ is differentiable and its derivative is given by Proposition \ref{phiprime}. Thus, the surrogate generalized Jacobian $\widehat{\partial}(\nabla\Phi_k)(X)$ of $ \nabla\Phi_k$ at $X$ is defined as follows:
\begin{equation}
\left\{\begin{array}{l}
\mathcal{V}\in \widehat{\partial}(\nabla\Phi_k)(X) \hbox{ if and only if there exists $\mathcal{G}\in\widehat{\partial}{\rm Prox}_{\mathcal{P}}(U_{k}(X)/\sigma_k)$ such that}\\
\mathcal{V}[D] = - \sigma_k \mathcal{G}[D]\\
~~~~~~~~~~- \sigma_k \big((\phi^+_{\sigma_k})'(W_{_k}^{(1)}(X))[D^{(1)}],
\dots,(\phi^+_{\sigma_k})'(W_{k}^{(L)}(X))[D^{(L)}]\big),\,\,\forall D\in\mathcal{Y}.
\end{array}
\right.
\end{equation}
With the generalized Jacobian of $\nabla\Phi_k$, we are ready to solve  equation \eqref{newton-equation} by the  SSN method, where the Newton systems are solved inexactly by the conjugate gradient (CG) method.
\begin{algorithm}[H]
\caption{A semismooth Newton (SSN) method for solving \eqref{newton-equation}}
\label{alg-ssn}
Given $\mu \in (0,1/2)$, $\bar{\eta} \in (0,1)$, $\tau \in (0,1]$,
and $\rho \in (0,1)$. Choose $X^0\in \S^p_{++}\times\cdots\times\S^p_{++}$. Iterate the following steps for $ j = 0,1, \dots$:
\begin{itemize}[topsep=1pt,itemsep=-.6ex,partopsep=1ex,parsep=0.5ex,leftmargin=8.5ex]
\item[{\bf Step 1.}] (Newton direction) Choose one specific map $\mathcal{V}_j\in \widehat{\partial}(\nabla\Phi_k)(X^j)$.
 Apply the CG method to find an approximate solution $D^j$ to
$$
 (\mathcal{V}_j-{\sigma^{-1}_k}{I})[D] = -\nabla \widehat{\Phi}_k(X^j)
$$
 such that $\| (\mathcal{V}_j-{\sigma^{-1}_k}{I})[D^j]+ \nabla \widehat{\Phi}_k(X^j)\| \leq \min(\bar{\eta},\|\nabla \widehat{\Phi}_k(X^j)\|^{1+\tau}).$
\item[{\bf Step 2.}] (Line search) Set $ \alpha_j = \rho^{m_j}$, where $m_j$ is the smallest nonnegative integer $m$ for which
$$
\widehat{\Phi}_k(X^j + \rho^m D^j) \geq \widehat{\Phi}_k(X^j) + \mu\rho^m \langle \nabla \widehat{\Phi}_k(X^j),D^j \rangle.
$$
\item[{\bf Step 3.}] Set $ X^{j+1} = X^j + \alpha_j D^j $.	
\end{itemize}
\end{algorithm}	
\noindent
Next, we derive the convergence result of the  SSN method (Algorithm \ref{alg-ssn}).
\begin{theorem}
Let $\{X^j\}$ be the infinite sequence generated by Algorithm \ref{alg-ssn}. Then $\{X^j\}$ converges to the unique optimal solution $\widehat{X}$ of \eqref{newton-equation} and $\|X^{j+1}-\widehat{X}\|=\mathcal{O}(\|X^{j}-\widehat{X}\|^{1+\tau})$.
\end{theorem}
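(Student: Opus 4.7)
The plan is to follow the standard framework for analyzing an inexact semismooth Newton method applied to a strongly concave, continuously differentiable function. Three ingredients need to be verified: (i) each surrogate Jacobian $\mathcal{V}_j$ gives rise to a uniformly invertible linear system, so that CG produces a well-defined ascent direction; (ii) the Armijo-type line search combined with strong concavity of $\widehat{\Phi}_k$ yields global convergence to the unique maximizer $\widehat{X}$; and (iii) the semismoothness of $\nabla \Phi_k$ at $\widehat{X}$, together with the inexactness tolerance in Step~1, gives the $1+\tau$ local rate.

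For step (i), I would argue as follows. By Theorem~\ref{thm-semismooth-FGL}, every element of $\widehat{\partial}{\rm Prox}_{\mathcal{P}}(U_k(X^j)/\sigma_k)$ is self-adjoint and positive semidefinite. Similarly, from Proposition~\ref{phiprime} the L\"owner-type operator $(\phi^+_{\sigma_k})'(W_k^{(l)}(X^j))[\cdot] = Q[\Gamma \odot (Q^T \cdot Q)]Q^T$ is self-adjoint and positive semidefinite since $\Gamma_{ij} > 0$. Therefore $\mathcal{V}_j$ is self-adjoint and negative semidefinite, and $\mathcal{V}_j - \sigma_k^{-1} I$ is symmetric negative definite with $\|(\mathcal{V}_j - \sigma_k^{-1} I)^{-1}\| \leq \sigma_k$. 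This uniform invertibility ensures CG on the (negated) positive definite system terminates with an $D^j$ satisfying $\langle \nabla\widehat{\Phi}_k(X^j), D^j \rangle \geq c\,\|\nabla\widehat{\Phi}_k(X^j)\|^2$ for a constant $c>0$, which is the ascent condition needed for the Armijo line search to terminate in finitely many inner iterations.

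For step (ii), since $\widehat{\Phi}_k$ is strongly concave (being the sum of the concave $\Phi_k$ and the strongly concave quadratic $-\|X-X^k\|^2/(2\sigma_k)$), its upper level sets are compact; because the line search guarantees $\widehat{\Phi}_k(X^{j+1}) \geq \widehat{\Phi}_k(X^j)$, the sequence $\{X^j\}$ stays bounded. A routine argument combining the ascent magnitude $\mu\alpha_j \langle \nabla\widehat{\Phi}_k(X^j), D^j\rangle$ with the ascent lower bound of step (i) shows $\nabla\widehat{\Phi}_k(X^j) \to 0$, and then strong concavity identifies the unique critical point $\widehat{X}$ as the limit.

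For step (iii), the core observation is that Theorem~\ref{thm-semismooth-FGL} gives the exact local affine identity ${\rm Prox}_{\mathcal{P}}(Y) - {\rm Prox}_{\mathcal{P}}(X) - \mathcal{W}[Y-X] = 0$ near $\widehat{X}$, which is far stronger than (strong) semismoothness of ${\rm Prox}_{\mathcal{P}}$ with respect to the surrogate family $\widehat{\partial}{\rm Prox}_{\mathcal{P}}$. The matrix function $\phi^+_{\sigma_k}$ is continuously differentiable by Proposition~\ref{phiprime} and, being a L\"owner operator generated by an analytic scalar function, is strongly semismooth. By the chain/sum rule for semismooth composition, $\nabla \Phi_k$ is strongly semismooth at $\widehat{X}$ with respect to $\widehat{\partial}(\nabla\Phi_k)$. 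Together with the uniform bound $\|(\mathcal{V}_j - \sigma_k^{-1} I)^{-1}\| \leq \sigma_k$ and the inexactness criterion $\|(\mathcal{V}_j - \sigma_k^{-1} I)[D^j] + \nabla\widehat{\Phi}_k(X^j)\| \leq \|\nabla\widehat{\Phi}_k(X^j)\|^{1+\tau}$, the standard inexact semismooth Newton convergence lemma (see, e.g., the analyses used in the references cited in the excerpt) implies that the unit step is eventually accepted and $\|X^{j+1}-\widehat{X}\| = \mathcal{O}(\|X^j-\widehat{X}\|^{1+\tau})$.

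The main obstacle is the semismoothness check in step (iii): since $\widehat{\partial}(\nabla\Phi_k)$ is a surrogate rather than a Clarke Jacobian, one must verify that the Newton-type residual bound used in the classical SSN rate proof holds with respect to this surrogate. The key point is that Theorem~\ref{thm-semismooth-FGL} delivers an \emph{exact} Newton identity for ${\rm Prox}_{\mathcal{P}}$ locally, which dominates any $o(\|Y-X\|)$ or $\mathcal{O}(\|Y-X\|^{1+\tau})$ remainder, while the remainder from $(\phi^+_{\sigma_k})'$ is automatically $\mathcal{O}(\|Y-X\|^2)$ by strong semismoothness; combining the two gives the required residual estimate and everything else is formal.
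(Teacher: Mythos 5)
Your proposal follows essentially the same route as the paper: the paper's proof is a citation-level argument that (a) upgrades Theorem~\ref{thm-semismooth-FGL} to strong semismoothness of ${\rm Prox}_{\mathcal{P}}$ with respect to $\widehat{\partial}{\rm Prox}_{\mathcal{P}}$, and then (b) invokes the strong concavity of $\widehat{\Phi}_k$, Proposition~\ref{phiprime}, and the inexact SSN framework of Li, Sun, and Toh (their Proposition~7 and Theorem~3) to get global convergence and the $1+\tau$ rate. Your three steps (uniform negative definiteness of $\mathcal{V}_j-\sigma_k^{-1}I$ and the resulting ascent property, global convergence by strong concavity plus Armijo, and the local rate from the surrogate-Jacobian residual estimate) are exactly the content of those cited results, so this is a correct reconstruction rather than a genuinely different argument.

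One point deserves correction. You assert that the exact local identity ${\rm Prox}_{\mathcal{P}}(Y)-{\rm Prox}_{\mathcal{P}}(X)-\mathcal{W}[Y-X]=0$ is ``far stronger than (strong) semismoothness.'' It is not by itself: semismoothness with respect to a multifunction (Definition~1 of Li, Sun, and Toh) requires, in addition to the residual estimate, that the map be directionally differentiable at the point in question, and directional differentiability does not follow from the residual identity. The paper supplies this missing ingredient explicitly, observing that ${\rm Prox}_{\mathcal{P}}$ is piecewise linear and Lipschitz, hence directionally differentiable by Theorem~7.5.17 of Facchinei and Pang; only then does Theorem~\ref{thm-semismooth-FGL} yield strong semismoothness. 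Your step~(iii) should be amended to include this observation; with it, the rest of your argument (including the strong semismoothness of $\phi^+_{\sigma_k}$, which is in fact continuously differentiable with a locally Lipschitz derivative, and the standard inexact Newton estimate using $\|(\mathcal{V}_j-\sigma_k^{-1}I)^{-1}\|\leq\sigma_k$) goes through as in the cited framework.
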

\begin{proof}
Since the proximal mapping ${\rm Prox}_{\mathcal{P}}$ is piecewise linear and Lipschitz continuous, we know from Theorem 7.5.17 by \cite{facchinei2007finite} that ${\rm Prox}_{\mathcal{P}}$ is directionally differentiable. This, together with Theorem \ref{thm-semismooth-FGL}, implies that ${\rm Prox}_{\mathcal{P}}$ is strongly semismooth with respect to the multifunction $\widehat{\partial}{\rm Prox}_{\mathcal{P}}$ (for its definition, see e.g., Definition 1 by \cite {li2017efficiently}). Therefore, the conclusion follows from the strong convexity of $\widehat{\Phi}_k(\cdot)$, Proposition \ref{phiprime}, and Proposition 7 \& Theorem 3 by \cite{li2017efficiently}.
\end{proof}

\section{Numerical Experiments}
In this section, we compare the performance of our algorithm rPPA with the alternating direction method of multipliers (ADMM) and the proximal Newton-type method implemented  by \cite{yang2015fused}
(referred to as MGL here) for which the solver is available at \url{http://senyang.info/}.

The following paragraph describes the measurement of the accuracy of an approximate optimal solution and the stopping criteria of the three methods.
Since both rPPA and ADMM can generate primal and dual approximate solutions, we can assess the accuracy of their solutions by the relative KKT residuals. Unlike the primal-dual method, MGL merely gives the primal solution and the KKT residual of a solution generated by MGL is not available. Instead, we measure the relative error of the objective value obtained by MGL with respect to that computed by rPPA. Based on the KKT optimality condition \eqref{def-KKT-MGL}, the accuracy of an approximate optimal solution $(\Theta,\Omega,X)$ generated by rPPA (Algorithm \ref{alg-ppa}) is measured by defining the following relative residuals:
$$
\begin{array}{l}
\eta_P:=\max \left\{
\frac{\|\Theta-{\rm Prox}_{\mathcal{P}}(\Theta + X - S)\|}{1+\|\Theta\|},\,
\frac{\|\Theta - \Omega \|}{1+\|\Theta\|},\,
\underset{1 \leq l \leq L}{\max}\Big\{ \frac{\|\Omega^{(l)} X^{(l)}-I\|}{1+\sqrt{p}} \Big\}
\right\}.
\end{array}
$$
Likewise, the accuracy of an approximate optimal solution $(\Theta,X,Z)$ generated by ADMM is measured by the relative KKT residual $\eta_A$ (defined in the supplementary material) that is analogous to $\eta_P$.

In our numerical experiments, we terminate rPPA if it satisfies the condition
$\eta_P< \varepsilon$ for a given accuracy tolerance $\varepsilon$; similarly for
ADMM with the stopping condition $\eta_A < \varepsilon.$
Note that the terminating condition for MGL is different.
Let ``${\rm pobj}_P$'' and ``${\rm pobj}_M$'' be the primal objective function values computed by rPPA and MGL, respectively.
MGL will be terminated when the relative difference of its objective value with respect to  {that} obtained by rPPA is smaller than the given tolerance $\varepsilon$, i.e.,
\begin{equation}\label{delta-M}
\Delta_M:= ({\rm pobj}_{M} - {\rm pobj}_{P})/(1+|{\rm pobj}_{M}|+|{\rm pobj}_{P}|) < \varepsilon.
\end{equation}

We adopt a warm-start strategy to initialize rPPA. That is, we first run ADMM (with identity matrices as the starting point) for a fixed number of iterations to generate a good initial point to warm-start rPPA. We also stop ADMM as soon as the relative KKT residual of the computed iterate is less than $100 \varepsilon$. Note that such a warm-starting strategy is sound since in the
initial phase of rPPA where the iterates are not close to the optimal solution (as measured by the associated relative KKT residual), it is computationally
wasteful to use the more expensive rPPA iteration when the fast local linear convergence behavior of the algorithm has yet to kick in. Under such
a scenario, naturally one would use cheaper iterations such as those of ADMM to generate the approximate
solution points until the relative KKT residual has been sufficiently reduced.

For the tuning parameters $\lambda_1$ and $\lambda_2$,  for each test instance we select three pairs
that lead to reasonable sparsity. In the following tables, ``P'' stands for rPPA; ``A'' stands for ADMM; ``M'' stands for MGL; ``nnz'' denotes the number of nonzero entries in the solution $\Theta$ obtained by rPPA using the  estimation:
$ {\rm nnz}:= \min \{k\,|\,\sum_{i=1}^{k}|\hat{x}_i| \geq 0.999\|\hat{x}\|_1\}, $
where $\hat{x}\in\Re^{p^2L}$ is the vector obtained via sorting all elements in $\Theta$ by magnitude in a descending order; ``density'' denotes the quantity nnz$/(p^2L)$.
The time is displayed in the format of ``hours:minutes:seconds'', and the fastest method in terms of running time is highlighted in red.
The errors  presented in the tables are the relative KKT residuals $\eta_P$ for rPPA and $\eta_A$ for ADMM; while the error for MGL is $\Delta_M$ in \eqref{delta-M}.
\subsection{Nearest-neighbour Networks}
In this section, we assess the effectiveness of the FGL model on a simulated network: nearest-neighbour network.
The nearest-neighbour network is generated by modifying the data generation mechanism described by \cite{li2006gradient}.
We set $p=500$ and $L=3$. For each $l = 1,2,\dots,L$, we generate 10,000 independently and identically distributed observations from a multivariate Gaussian distribution $\mathcal{N}_p(0,(\Omega^{(l)})^{-1})$, where $\Omega^{(l)}$ is the precision matrix of the $l$-th class. The details of the generation of $\Omega^{(l)}$ are as follows. First of all, $p$ points are randomly generated on a unit square, their pairwise distances are calculated, and $m$-nearest neighbours of each point in terms of distance are found. The nearest-neighbour network is obtained by linking any two points that are $m$-nearest neighbours of each other. The integer $m$ controls the degree of sparsity of the network, and we set $m=5$ in our simulation. Subsequently, we add heterogeneity to the common structure by further creating individual links as follows: for each $\Omega^{(l)}$, a pair of symmetric zero elements is randomly selected and replaced with a value uniformly drawn from the interval $[-1,-0.5]\cup[0.5,1]$. This procedure is repeated ceil$(M/4)$ (the nearest integer greater than or equal to $M/4$) times, where $M$ is the number of edges in the nearest-neighbour graph. {In our simulation, the true number of edges in the three networks is $3690$.

There is a pair of tuning parameters $\lambda_1$ and $\lambda_2$ which must be specified. In the FGL model, $\lambda_1$ drives sparsity and $\lambda_2$ drives similarity, and we say that $\lambda_1$ and $\lambda_2$ are the sparsity and similarity control parameters respectively.
In order to show the diversity of sparsity in our experiments, we choose a series of $\lambda_1$ for the FGL model with $\lambda_2$ fixed.
Figure~\ref{fig-nn} shows the relative ability of the FGL model to recover the network structures and to detect the change-points.

Figure~\ref{figsub-1} displays the number of true positive edges selected (i.e., TP edges) against the number of false edges selected (i.e., FP edges) for the FGL model. We say that an edge $(i,j)$ in the $l$-th network is selected in the estimate $\widehat{\Theta}^{(l)}$ if $\widehat{\Theta}^{(l)}_{ij} \neq 0$, and we say that the edge is true in the precision matrix $(\Sigma^{(l)})^{-1}$ if $((\Sigma^{(l)})^{-1})_{ij} \neq 0$ and false if $((\Sigma^{(l)})^{-1})_{ij} = 0$.
We can see from the figure that the FGL model with $ \lambda_2=0.005$ can recover almost all of the true positive edges without false positive edges.
Figure~\ref{figsub-1} also  {shows} that for the FGL model the similarity control parameter $\lambda_2=0.005$ is much better than $\lambda_2=0.05$ in terms of the ability of true edges detection. When $\lambda_2=0.05$, the FGL model can merely detect about 3000 true positive edges while the the number of false positive edges is increased to over 600. One possible reason is that $\lambda_2=0.05$ is too large compared with the underlying optimal one in this case.

Figure~\ref{figsub-2} illustrates the sum of squared errors between estimated edge values and true edge values, i.e., $\sum_{l=1}^{L} \sum_{i < j}\big( \widehat{\Theta}^{(l)}_{ij} - ((\Sigma^{(l)})^{-1})_{ij}\big)^2$. When the number of the total edges selected is increasing (i.e., the sparsity control parameter is decreasing), the error is decreasing and finally reaches a fairly low value.

Figure~\ref{figsub-3}  plots the number of true positive differential edges against false positive differential edges. A differential edge is an edge that differs between classes and thus corresponds to a change-point. We say that the $(i,j)$ edge is estimated to be differential between the $l$-th and the $(l+1)$-th networks if $|\widehat{\Theta}^{(l)}_{ij} - \widehat{\Theta}^{(l+1)}_{ij} | > 10^{-6}$, and we say that it is truly differential if $| ((\Sigma^{(l)})^{-1})_{ij} - ((\Sigma^{(l+1)})^{-1})_{ij} |> 10^{-6}$.
The number of differential edges is computed for all successive pairs of networks.
The best point in Figure \ref{figsub-3} is the red one which has approximately 2700 true positive differential edges and almost no false one.
We can also see from Figure \ref{figsub-3} that  all the blue points have no false positive differential edge and small numbers of true positive differential edges. This might  be caused by the larger similarity control parameter $\lambda_2=0.05$ which
forces an excessive number of edges across $L$ networks to be similar.

\begin{figure}[!h]
\centering
\begin{subfigure}[b]{.32\linewidth}
\centering
\includegraphics[width=1.0\textwidth]{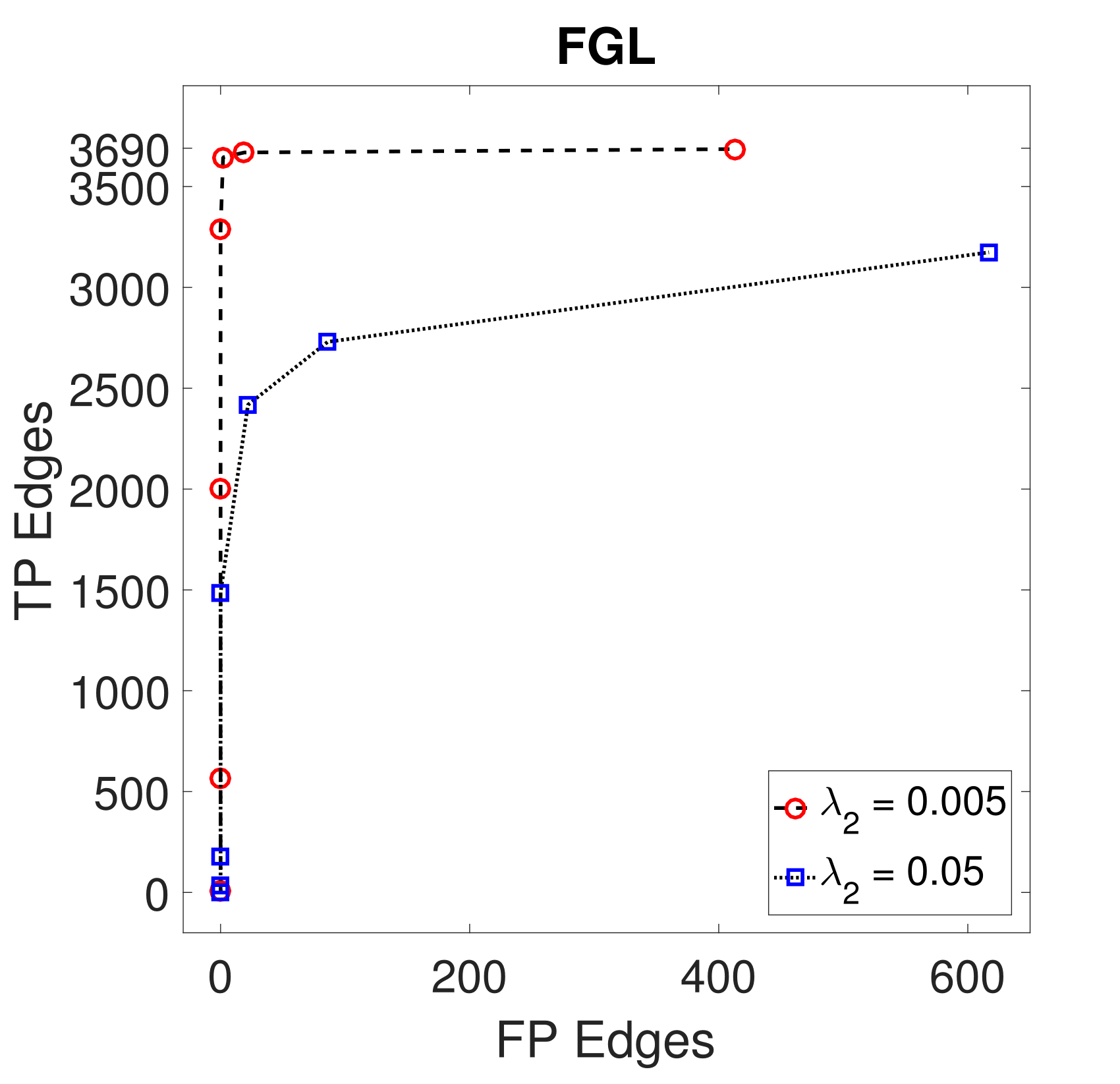}
\caption{}\label{figsub-1}
\end{subfigure}
\begin{subfigure}[b]{.32\linewidth}
\centering
\includegraphics[width=1.0\textwidth]{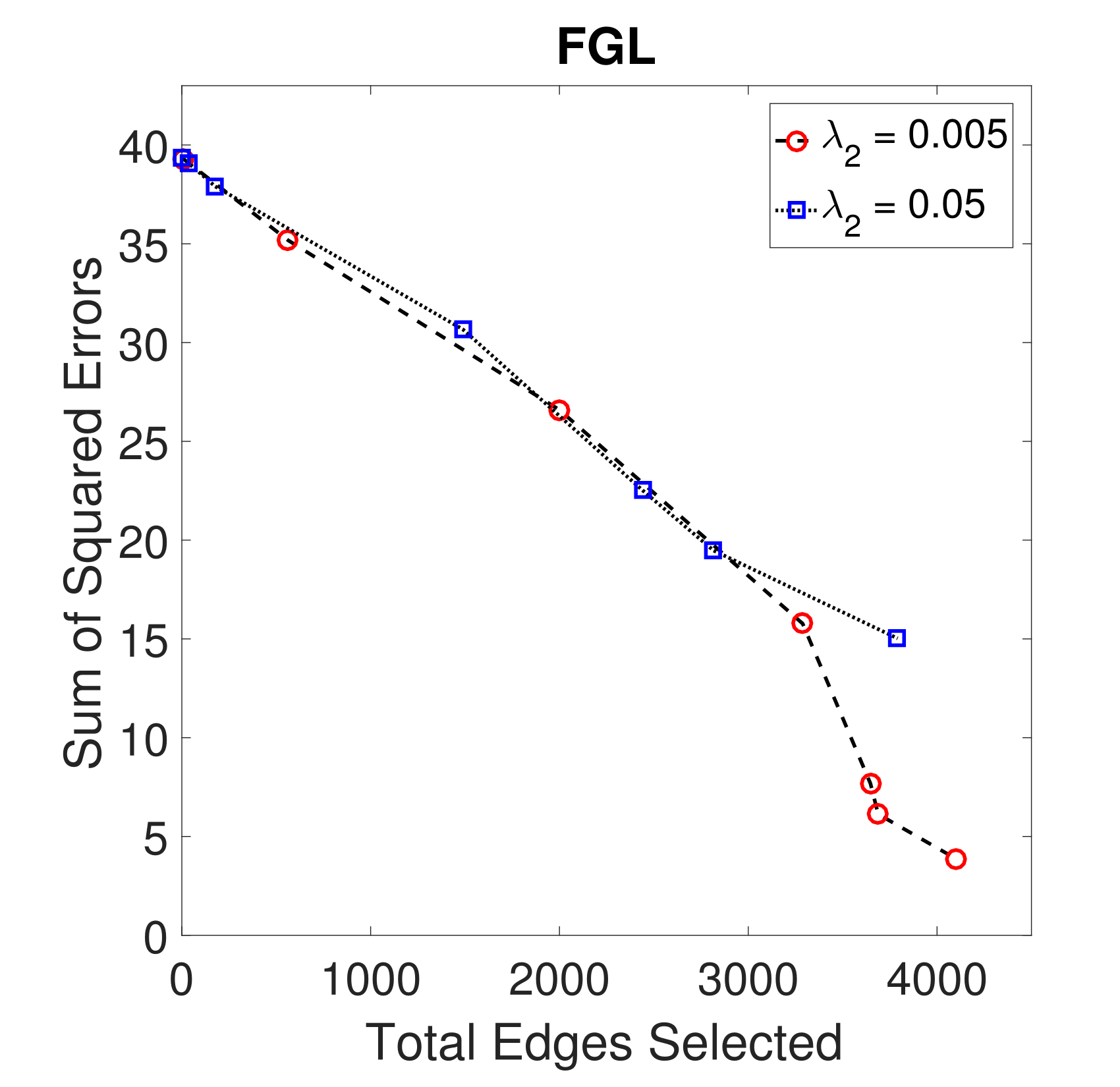}
\caption{}\label{figsub-2}
\end{subfigure}
\begin{subfigure}[b]{.32\linewidth}
\centering
\includegraphics[width=1.0\textwidth]{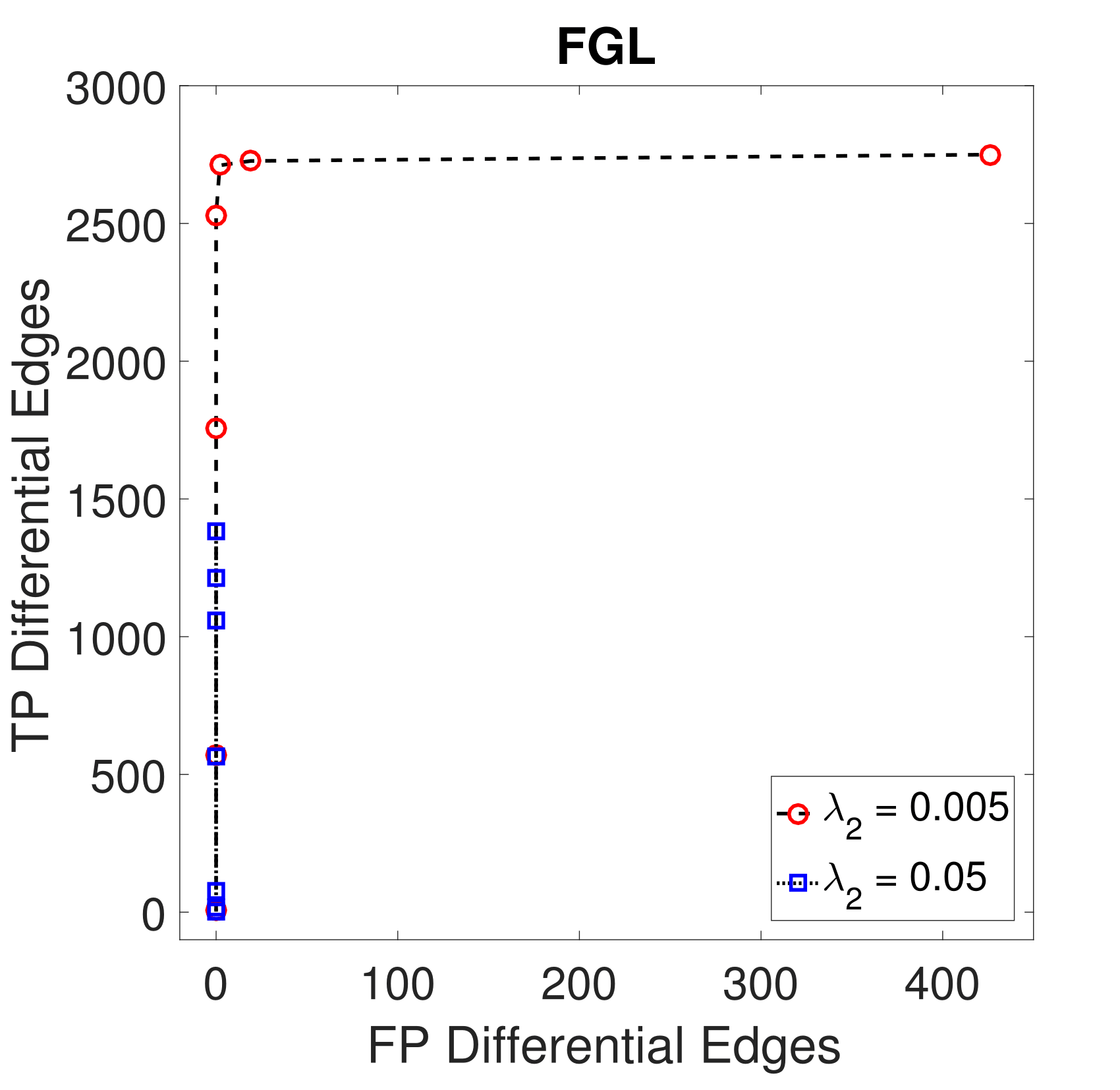}
\caption{}\label{figsub-3}
\end{subfigure}
\caption{\small{Performances on nearest-neighbour networks with $p=500$ and $L=3$.
(a) number of edges correctly identified to be nonzero (true positive edges) versus number of edges incorrectly identified to be nonzero (false positive edges); (b) sum of squared errors in edge values versus the total number of edges estimated to be nonzero; (c) number of edges correctly found to have values differing between successive classes (true positive differential edges) versus  number of edges incorrectly found to have values differing between successive classes (false positive differential edges).
}}
 \label{fig-nn}
\end{figure}

\subsection{Standard \& Poor's 500 Stocks}
In this section, we compare rPPA, ADMM, and MGL on the {\tt Standard \& Poor's 500 stock price} data sets. The stock price data sets contain daily returns of 500 stocks  over a long period, and can be downloaded from the link \url{www.yahoo.com}. The dependency structures of different stocks vary over time. But it appears that the dependency networks change smoothly  over time. Therefore, the FGL model might be able to find the interactions among these stocks and how they evolve over time.

We first consider a relatively short three-year time period from January 2004 to December 2006. During this period, there are totally 755  daily returns of 370 stocks. We call this data set {\it{SPX3a}}.
For each year, it contains approximately 250  daily returns of each stock.
Considering the limited number of observations in each year and the interpretation of the results, we choose to analyse random smaller subsets of all involved stocks, whose sizes are chosen to be $p=100$ and $p=200$, over $L=3$ periods.

In addition to the above data set over three years, a relatively long  period from January 2004 to December 2014 is also considered in the experiments, which is referred to as {\it {SPX11b}}. Since the time period is longer than the previous one, the number of stocks becomes smaller as some stocks might disappear. During the 11-year time period, there are 2769 daily closing prices of 272 stocks. We can set a relatively large parameter $L=11$ according to years from January 2004 to December 2014.
Again, we choose to analyse two random subsets of all existing stocks, of which the sizes are selected to be $p=100$ and $p=200$.
\begin{table}[!ht]\centering
\caption{\small{Performances of rPPA, ADMM, and MGL on stock price data. Tolerance $\varepsilon=$ 1e-6.}}\label{table-spx}
{\scriptsize\begin{tabular}{lllccccccccc}
\toprule
Problem & $(\lambda_1,\lambda_2)$ & Density & \multicolumn{3}{c}{Iteration} & \multicolumn{3}{c}{Time} & \multicolumn{3}{c}{Error} \\
\cmidrule(l){4-6} \cmidrule(l){7-9} \cmidrule(l){10-12}
 $(p,L)$ & & & P & A & M & P & A & M & P & A & M\\
\midrule
 & (1e-04,1e-05) & 0.039 & 25 & 3701 & 6 & {\color{red} 06} & 33 & 33 & 6.0e-07 & 9.5e-07 & 1.1e-06 \\
{\it SPX3a} & (5e-05,5e-06) & 0.144 & 24 & 3701 & 9 & {\color{red} 08} & 33 & 43 & 9.9e-07 & 9.5e-07 & 4.0e-06 \\
(100,3) & (2e-05,2e-06) & 0.241 & 26 & 5359 & 19 & 11 & 49 & {\color{red} 09} & 7.0e-07 & 1.0e-06 & 1.2e-05 \\
&&&&&&&&&&&\\[-0.1cm]
 & (1e-04,1e-05) & 0.025 & 24 & 3301 & 9 & {\color{red} 15} & 01:14 & 01:26 & 8.1e-07 & 8.2e-07 & 2.3e-06 \\
{\it SPX3a} & (5e-05,5e-06) & 0.086 & 24 & 3301 & 17 & {\color{red} 22} & 01:15 & 03:22 & 6.8e-07 & 8.3e-07 & 4.7e-06 \\
(200,3) & (2e-05,2e-06) & 0.150 & 26 & 5920 & 44 & {\color{red} 30} & 02:17 & 03:59 & 6.4e-07 & 1.0e-06 & 8.0e-06 \\
&&&&&&&&&&&\\[-0.1cm]
 & (5e-04,5e-05) & 0.028 & 24 & 3701 & 8 & {\color{red} 18} & 01:23 & 02:40 & 9.6e-07 & 1.0e-06 & 3.2e-06 \\
{\it SPX11b} & (1e-04,1e-05) & 0.126 & 24 & 3701 & 111 & {\color{red} 27} & 01:55 & 04:59 & 9.3e-07 & 1.0e-06 & 4.9e-06 \\
(100,11) & (5e-05,5e-06) & 0.206 & 24 & 3710 & 388 & {\color{red} 30} & 01:59 & 13:16 & 9.4e-07 & 1.0e-06 & 5.6e-06 \\
&&&&&&&&&&&\\[-0.1cm]
 & (5e-04,5e-05) & 0.017 & 22 & 3501 & 28 & {\color{red} 54} & 03:58 & 46:47 & 6.2e-07 & 9.9e-07 & 1.7e-06 \\
{\it SPX11b} & (1e-04,1e-05) & 0.081 & 24 & 3601 & 477 & {\color{red} 01:19} & 05:13 & 01:34:40 & 9.5e-07 & 8.8e-07 & 3.5e-06 \\
(200,11) & (5e-05,5e-06) & 0.134 & 24 & 3573 & 1076 & {\color{red} 01:38} & 05:21 & 03:00:00 & 9.8e-07 & 1.0e-06 & 4.0e-05 \\
\bottomrule
\end{tabular}}
\end{table}

Table~\ref{table-spx} shows the comparison of rPPA, ADMM, and MGL on the stock price data sets {\it{SPX3a}} and {\it{SPX11b}} with $100$ and $200$ selected stocks.
One outstanding observation from the table is that rPPA outperforms ADMM and MGL by an obvious margin and rPPA is faster than the other two methods except for one instance.
For the exceptional instance, rPPA is still faster than ADMM and merely two seconds slower than MGL.
In addition, we find that both rPPA and ADMM succeeded in solving all instances; while MGL failed to solve two of them within one hour. This might imply that MGL is not robust for solving the FGL model
when applied to the stock price data sets. The numerical results show convincingly that our algorithm rPPA can solve the FGL problems  efficiently and robustly. The superior performance of rPPA can mainly be attributed to our ability to extract and exploit the sparsity structure (in the surrogate generalized Jacobian of ${\rm Prox}_{\mathcal{P}}$)
within the SSN method to solve each rPPA subproblem very efficiently.
\begin{figure}[!ht]
\centering
\includegraphics[width=1.0\textwidth]{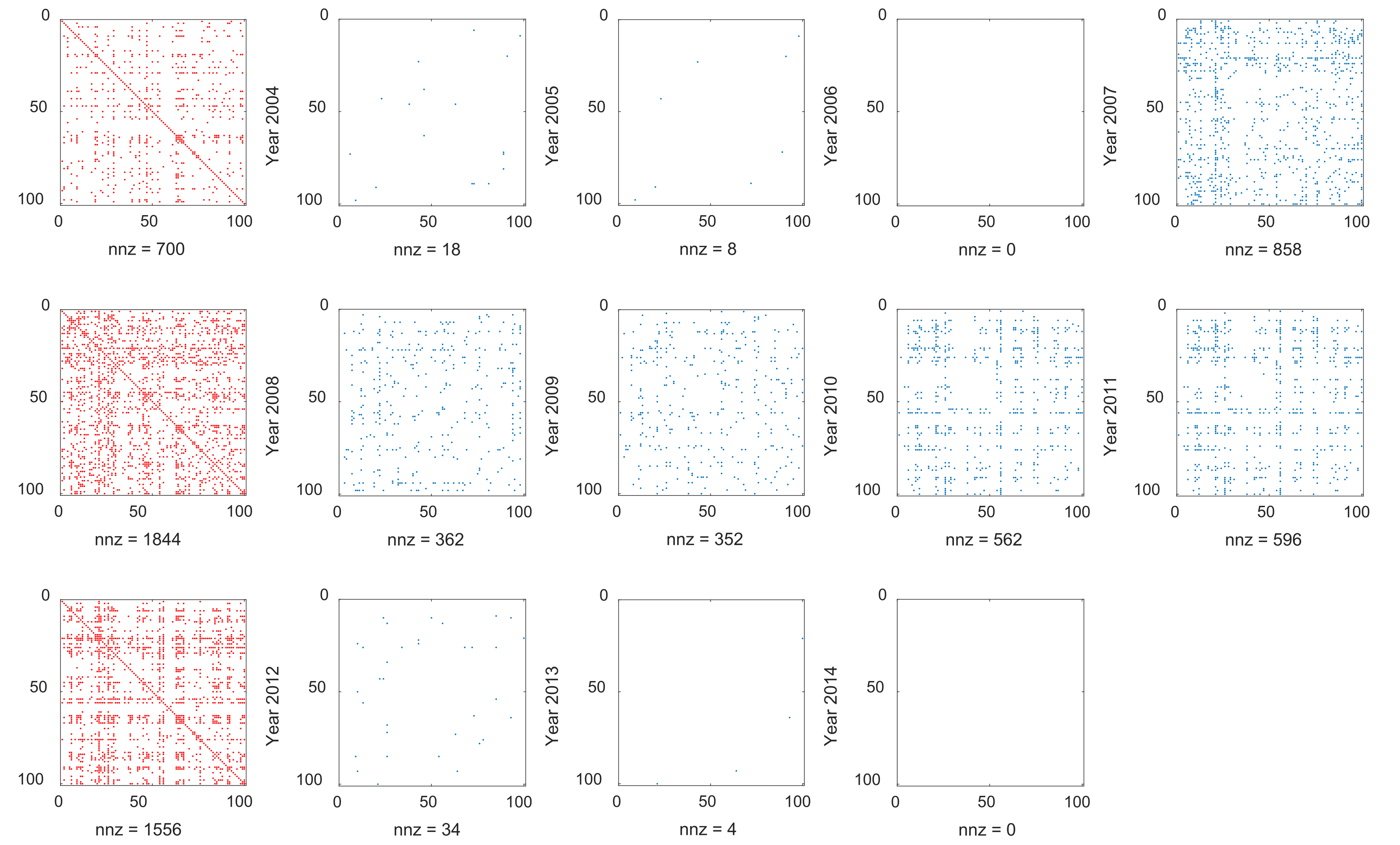}
\caption{\small{Patterns of the estimated precision matrices over 11 years on the stock price data sets. The red pattern extracts the common structure of those in the same row. The blue pattern corresponds to individual edges specific to its own network.}}\label{fig-spx11}
\end{figure}

Figure~\ref{fig-spx11} displays the sparse patterns of $11$ estimated precision matrices from year 2004 to year 2014 on the data set {\it SPX11b} with $(\lambda_1,\lambda_2) =$ (1e-4,1e-4) in application of the FGL model.
It should be noted that this period covers the 2008 financial crisis.
We manually split the time points into three stages (one stage corresponds to one row in Figure \ref{fig-spx11}) to aid the interpretation of the results.
Each  red pattern in the left panel presents the common structure across the estimated precision matrices in its stage.
And each blue pattern visualizes the individual edges specific to its own precision matrix. Generally, one can hardly expect a meaningful common structure across all the $11$ time points,  and thus we provide here the common structure across parts of nearby precision matrices.
One can clearly see that more edges are detected in the middle stage, and the number of the common edges across year  2008, 2009, 2010, 2011 is correspondingly larger than that in the earlier and later stages. The increased amount of interactions among the stocks over this period is likely due to the 2008 global financial crisis and its sustained effects.
Another observation is that the number of edges had a drastic increase in 2007, kept at a high level during the 2008 global financial crisis and a  certain period  after that, and then went down to a  level still higher than that of the pre-crisis period (year 2004, 2005, 2006). The sudden increase in 2007 might be  seen as a prediction of the oncoming financial crisis in 2008. The increased amount of interactions among stocks after the financial crisis compared to that in the pre-crisis period may indicate some essential changes of the financial landscape. To a certain degree, the observations agree well with those observed by \cite{yang2018estimating}.

\subsection{University Webpages}
Here we evaluate  the numerical performances of rPPA, ADMM, and MGL on the data set {\tt university webpages}, which is provided by \cite{ana2007improving} and available at
\url{http://ana.cachopo.org/datasets-for-single-label-text-categorization.}
The original pages were collected from  {computer science departments of} various universities in 1997. We selected four largest and meaningful classes in our experiment: Student, Faculty, Staff, and Department. For each class, the collection contains pages from four universities: Cornell, Texas, Washington, Wisconsin, and other miscellaneous pages from other universities.
Furthermore, the original text data have been preprocessed by stemming techniques, that is, reducing words to their morphological roots.
The preprocessed data sets downloaded from the link above contain two files: two thirds of the pages were randomly chosen as training set ({\it{Webtrain}}) and the remaining third as testing set ({\it{Webtest}}). Table \ref{table-doc} presents the distribution of documents per class.
\begin{table}[!ht]\centering
\caption{\small{Distribution of documents of classes Student, Faculty, Course, and Project.}}\label{table-doc}
{\scriptsize\begin{tabular}{lccccc}
\toprule
Class & Student & Faculty & Course & Project & Total\\
\midrule
\#train docs& 1097& 750 & 620 & 336 & 2803\\
\#test docs & 544 & 374 & 310 & 168 & 1396\\
\bottomrule
\end{tabular}}
\end{table}

Next, we apply the FGL model  to the ${\it Webtest}$ data set for the purpose of interpreting the  data. We choose tuning parameters that enforce high sparsity and similarity. In our experiment, we set $\lambda_1=0.005$ and $\lambda_2=0.003$. The resulting common structure is displayed in Figure~\ref{fig-webpage-common}. The thickness of an edge is proportional to the magnitude of the associated average partial correlation. Figure~\ref{fig-webpage-common} shows that some standard phrases in computer science, such as program-languag, oper-system, distribut-system, softwar-engin, possess high partial correlations among their constituent words in all four classes. It successfully demonstrates the effectiveness of the FGL model for exploring the similarity across related classes. Table~\ref{table-web}  shows the comparison of the three methods rPPA, ADMM, and MGL on the webpages data sets with data dimension $p=100$, $p=200$, and $p=300$. As can be seen, rPPA outperforms ADMM and MGL by a large margin for  most of the tested webpages data sets.
\begin{figure}[!ht]
 \centering
 \includegraphics[width=0.8\textwidth]{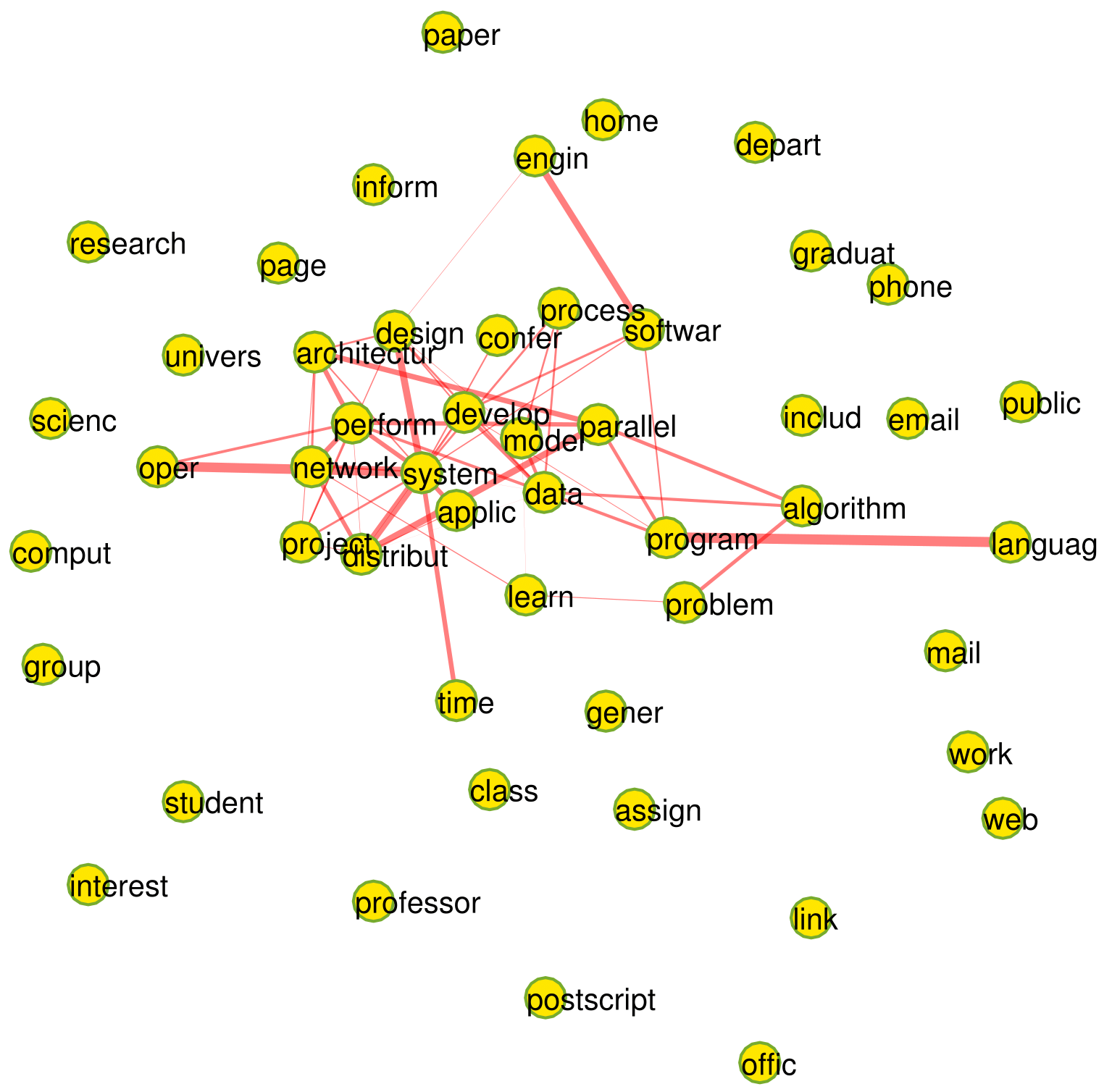}
 \caption{\small{Common structure in {\it Webtest} data. The nodes  {represent}  $50$ words with highest frequencies. The width of an edge is proportional to the average magnitude of the partial correlation.}}\label{fig-webpage-common}
\end{figure}
\begin{table} [!ht]\centering
\caption{\small{Performances of rPPA, ADMM, and MGL on webpages data. Tolerance $\varepsilon =$ 1e-6.}}\label{table-web}
{\scriptsize\begin{tabular}{lllccccccccc}
\toprule
Problem & $(\lambda_1,\lambda_2)$ & Density & \multicolumn{3}{c}{Iteration} & \multicolumn{3}{c}{Time} & \multicolumn{3}{c}{Error} \\
\cmidrule(l){4-6} \cmidrule(l){7-9} \cmidrule(l){10-12}
 $(n,L)$ & & & P & A & M & P & A & M & P & A & M\\
\midrule
 & (1e-02,1e-03) & 0.015 & 16 & 2401 & 4 & 06 & 25 & {\color{red} 05} & 5.8e-07 & 9.9e-07 & 1.2e-07 \\
 {\it Webtest} & (5e-03,5e-04) & 0.047 & 16 & 2401 & 6 & {\color{red} 07} & 27 & 10 & 5.6e-07 & 9.9e-07 & 3.1e-07 \\
 (100,4) & (1e-03,1e-04) & 0.219 & 15 & 701 & 38 & {\color{red} 05} & 09 & 49 & 5.7e-07 & 6.1e-07 & 9.0e-07 \\
&&&&&&&&&&&\\[-0.1cm]
 & (1e-02,1e-03) & 0.008 & 18 & 2101 & 7 & {\color{red} 11} & 01:03 & 59 & 7.3e-07 & 8.5e-07 & 8.6e-07 \\
 {\it Webtest} & (5e-03,5e-04) & 0.025 & 18 & 2101 & 8 & {\color{red} 12} & 01:04 & 01:05 & 6.8e-07 & 8.5e-07 & 4.7e-07 \\
 (200,4) & (1e-03,1e-04) & 0.156 & 18 & 2101 & 72 & {\color{red} 12} & 01:12 & 07:31 & 5.5e-07 & 7.1e-07 & 9.3e-07 \\
&&&&&&&&&&&\\[-0.1cm]
 & (5e-03,5e-04) & 0.016 & 18 & 2101 & 9 & {\color{red} 25} & 02:12 & 03:44 & 5.9e-07 & 8.3e-07 & 3.7e-07 \\
 {\it Webtest} & (1e-03,1e-04) & 0.119 & 17 & 2101 & 258 & {\color{red} 49} & 02:16 & 39:58 & 7.9e-07 & 8.6e-07 & 1.0e-06 \\
 (300,4) & (5e-04,5e-05) & 0.244 & 19 & 2901 & 1393 & {\color{red} 01:18} & 03:07 & 02:22:23 & 5.6e-07 & 8.0e-07 & 1.0e-06 \\
&&&&&&&&&&&\\[-0.1cm]
 & (1e-02,1e-03) & 0.011 & 24 & 20000 & 3 & 12 & 03:11 & {\color{red} 04} & 8.4e-07 & 7.0e-06 & 3.4e-06 \\
 {\it Webtrain} & (5e-03,5e-04) & 0.030 & 24 & 20000 & 5 & 13 & 03:35 & {\color{red} 08} & 8.1e-07 & 7.0e-06 & 8.9e-07 \\
 (100,4) & (1e-03,1e-04) & 0.162 & 24 & 20000 & 22 & {\color{red} 14} & 03:52 & 01:06 & 7.3e-07 & 7.0e-06 & 3.5e-06 \\
&&&&&&&&&&&\\[-0.1cm]
 & (5e-03,5e-04) & 0.015 & 24 & 20000 & 5 & 46 & 10:53 & {\color{red} 17} & 8.5e-07 & 6.7e-06 & 1.2e-06 \\
 {\it Webtrain} & (1e-03,1e-04) & 0.105 & 24 & 15227 & 33 & {\color{red} 44} & 08:23 & 03:02 & 7.2e-07 & 1.0e-06 & 2.6e-06 \\
 (200,4) & (5e-04,5e-05) & 0.210 & 24 & 20000 & 95 & {\color{red} 52} & 10:41 & 06:34 & 6.9e-07 & 6.6e-06 & 2.7e-06 \\
&&&&&&&&&&&\\[-0.1cm]
 & (5e-03,5e-04) & 0.010 & 24 & 20000 & 7 & {\color{red} 01:31} & 21:31 & 02:07 & 8.1e-07 & 5.9e-06 & -1.2e-08 \\
 {\it Webtrain} & (1e-03,1e-04) & 0.077 & 24 & 20000 & 52 & {\color{red} 01:33} & 22:06 & 20:47 & 6.3e-07 & 5.9e-06 & 1.8e-06 \\
 (300,4) & (5e-04,5e-05) & 0.168 & 24 & 20000 & 155 & {\color{red} 01:51} & 21:56 & 18:09 & 6.1e-07 & 5.9e-06 & 1.8e-06 \\
\bottomrule
\end{tabular}}
\end{table}

\section{Conclusion}
We have designed an efficient and globally convergent regularized proximal point algorithm for solving the primal formulation of the fused graphical Lasso problem. From a theoretical perspective, we established the Lipschitiz continuity of the solution mapping and consequently obtained that the
primal and dual sequences are locally linearly convergent. This lays the foundation for the efficiency of the proposed algorithm. Moreover, the second order information was also fully exploited,
which further leads to the high efficiency of the proposed algorithm.
Numerically, we demonstrated the superior efficiency and robust performance of the proposed method by comparing it with the extensively used alternating direction method of multipliers and the proximal Newton-type method \citep{yang2015fused} on both  synthetic and real data sets. In summary, the proposed semismooth Newton based  regularized proximal point algorithm is a highly efficient method for solving the fused graphical Lasso problems.

\section{Supplementary Materials}

\begin{description}
  \item[Supplementary material:] It contains technical details (generalized Jacobian of the proximal mapping of the fused Lasso regularizer and implementation of ADMM) and  numerical results (on data  {\tt University Webpages} and {\tt 20 Newsgroups}). (pdf file)
\end{description}
\bibliographystyle{Chicago}

\clearpage
\setcounter{equation}{0}
\setcounter{table}{0}
\setcounter{figure}{0}
\setcounter{page}{1}
\begin{appendices}

\renewcommand*{\thesection}{\arabic{section}}
\begin{center}
  {\LARGE \bf Supplementary material to ``An Efficient Linearly Convergent Regularized Proximal Point Algorithm for Fused Multiple Graphical Lasso Problems''}
\end{center}
\begin{center}
\large Ning Zhang \,\,\, Yangjing Zhang \,\,\, Defeng Sun \,\,\, Kim-Chuan Toh
\end{center}
\begin{center}
\date{February 19, 2019}
\end{center}
\section{Generalized Jacobian of the Proximal Mapping of the Fused Lasso Regularizer}
In this section, we recall the characterization of the generalized Jacobian of the fused Lasso regularizer \citep{tibshirani2005sparsity}, which will be used to derive the explicit expression of the generalized Jacobian of the
fused graphical Lasso (FGL) regularizer.

The fused Lasso regularizer is defined by
$
\varphi(x)=\lambda_1\|x\|_1+\lambda_2\|Bx\|_1,\,\forall\, x\in\Re^L,
$
where the matrix $B\in \Re^{(L-1) \times L}$ is defined by $Bx = [x_1 - x_2;  \dots; x_{L-1} - x_L]$. Denote the proximal mapping of $\lambda_2\|B \cdot\|_1$ by
$
x_{\lambda_2}(v):=\arg\min_x \left\{\lambda_2\|Bx\|_1+\frac{1}{2}\|x-v\|^2\right\},\,\,\forall\,v\in\Re^L.
$

\begin{lemma}\cite[Proposition 1]{friedman2007pathwise} 
Given $\lambda_1,\lambda_2\geq 0$, it holds that
$
{\rm Prox}_{\varphi}(v)= {\rm Prox}_{\lambda_1 \|\cdot\|_1} (x_{\lambda_2}(v))=
{\rm sign}(x_{\lambda_2}(v))\odot \max(|x_{\lambda_2}(v)|-\lambda_1,0),\,\,\forall\,v\in\Re^L.
$
\end{lemma}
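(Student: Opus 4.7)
The plan is to verify the KKT optimality condition for the proximal mapping of $\varphi$ by producing explicit subgradients that certify
$w := {\rm sign}(u)\odot\max(|u|-\lambda_1,0) = {\rm Prox}_{\varphi}(v)$, where $u := x_{\lambda_2}(v)$.
Since the proximal objective $\varphi(x)+\tfrac{1}{2}\|x-v\|^2$ is strongly convex, it suffices to establish the inclusion
$v - w \in \lambda_1\,\partial\|w\|_1 + \lambda_2 B^T\partial\|Bw\|_1 = \partial\varphi(w)$.

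First, I will read off two subgradient identities. By optimality of $u$ in the $\lambda_2$-only proximal problem, there exists $s\in\Re^{L-1}$ with $\|s\|_\infty\leq 1$ and $s_i = {\rm sign}((Bu)_i)$ whenever $(Bu)_i\neq 0$, such that $v - u = \lambda_2 B^T s$. Second, by the componentwise definition of $\ell_1$ soft-thresholding, $u - w = \lambda_1 t$ where $t\in\partial\|w\|_1$ (concretely, $t_i = {\rm sign}(u_i)$ on the set $\{|u_i|>\lambda_1\}$ and $t_i = u_i/\lambda_1\in[-1,1]$ otherwise). Summing the two identities yields $v - w = \lambda_1 t + \lambda_2 B^T s$, so everything is in place once I show that the same $s$ also lies in $\partial\|Bw\|_1$.

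This last sign-compatibility check is the crux of the argument and the main obstacle. I only need to verify the implication $(Bw)_i\neq 0 \Rightarrow s_i = {\rm sign}((Bw)_i)$, because $\|s\|_\infty\leq 1$ already holds. I will use the elementary fact that the scalar soft-thresholding map ${\rm soft}_{\lambda_1}(a) := {\rm sign}(a)\max(|a|-\lambda_1,0)$ is monotone non-decreasing and $1$-Lipschitz, so that $(Bw)_i = {\rm soft}_{\lambda_1}(u_i)-{\rm soft}_{\lambda_1}(u_{i+1})$ either vanishes or has the same sign as $(Bu)_i = u_i - u_{i+1}$. In the nonzero case this forces ${\rm sign}((Bw)_i) = {\rm sign}((Bu)_i) = s_i$, hence $s\in\partial\|Bw\|_1$. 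Combining this with the two subgradient identities gives the required KKT condition and thereby the first equality; the second equality in the statement is just the standard closed-form for ${\rm Prox}_{\lambda_1\|\cdot\|_1}$ applied to $u = x_{\lambda_2}(v)$.
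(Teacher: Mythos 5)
Your argument is correct and is essentially the proof given in the cited source (the paper itself states this lemma without proof, deferring to Proposition~1 of Friedman et al.~(2007), whose argument is exactly this subgradient-matching one). The key step — that scalar soft-thresholding is monotone, so $(Bw)_i\neq 0$ forces ${\rm sign}((Bw)_i)={\rm sign}((Bu)_i)$ and hence the multiplier $s$ from the fusion-only problem remains a valid subgradient of $\|\cdot\|_1$ at $Bw$ — is precisely the crux of the original proof, and you have handled it correctly.
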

\begin{lemma}\cite[Lemma 1]{li2018efficiently}
Given $\lambda_2\geq 0$, it holds that
$
x_{\lambda_2}(v) = v-B^T z_{\lambda_2}(Bv),\,\,\forall\,v\in\Re^{L},
$
where $z_{\lambda_2}(u):=\arg\min\limits_{z} \left\{
\frac{1}{2}\|B^T z\|^2 - \langle z,\,u\rangle~|\,\,\|z\|_{\infty}\leq\lambda_2\right\},\,\,\forall\,u\in\Re^{L-1}.$
\end{lemma}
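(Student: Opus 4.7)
The plan is to derive the stated identity by Fenchel--Rockafellar duality applied to the strongly convex primal problem defining $x_{\lambda_2}(v)$. First I would rewrite the nonsmooth term via its support-function representation $\lambda_2\|Bx\|_1 = \max_{\|z\|_\infty \leq \lambda_2} \langle z, Bx\rangle$, which recasts
\[
\min_{x\in\Re^L} \Big\{\lambda_2\|Bx\|_1 + \tfrac{1}{2}\|x-v\|^2\Big\}
\]
as the saddle-point problem
\[
\min_{x\in\Re^L}\; \max_{\|z\|_\infty \leq \lambda_2} \Big\{\langle z, Bx\rangle + \tfrac{1}{2}\|x-v\|^2\Big\}.
\]
Since the inner function is convex in $x$, concave (indeed linear) in $z$, and the feasible set for $z$ is a nonempty compact convex polytope, strong duality holds (by Sion's minimax theorem, or equivalently by standard Fenchel duality for this composite form), so I may interchange $\min$ and $\max$.

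Next, for any fixed $z$, the inner unconstrained minimization in $x$ of $\langle z, Bx\rangle + \tfrac{1}{2}\|x-v\|^2$ is a smooth strongly convex quadratic whose first-order optimality condition yields the closed-form solution $x(z) = v - B^T z$. Substituting this expression back gives
\[
\langle z, B(v - B^T z)\rangle + \tfrac{1}{2}\|B^T z\|^2 = \langle z, Bv\rangle - \tfrac{1}{2}\|B^T z\|^2,
\]
so the dual problem reduces to
\[
\min_{\|z\|_\infty \leq \lambda_2} \Big\{\tfrac{1}{2}\|B^T z\|^2 - \langle z, Bv\rangle\Big\},
\]
which is exactly the problem defining $z_{\lambda_2}(Bv)$. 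By strong duality, the primal and dual optima are linked by the relation $x = v - B^T z$ evaluated at a dual optimizer, yielding $x_{\lambda_2}(v) = v - B^T z_{\lambda_2}(Bv)$, as claimed.

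The only subtlety (not really an obstacle) is justifying strong duality and the validity of the min--max swap; this is routine given compactness of the $\ell_\infty$-ball. A secondary point to note is that although $z_{\lambda_2}(Bv)$ need not be unique when $B^T$ has a nontrivial kernel, the quantity $B^T z_{\lambda_2}(Bv)$ is uniquely determined (different optimal $z$'s differ by an element of $\ker B^T$), which is all that is required; the primal optimizer $x_{\lambda_2}(v)$ is unique because the primal objective is strongly convex in $x$. This makes the identity well-posed regardless of whether $B^T$ is injective.
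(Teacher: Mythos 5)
Your derivation is correct: the support-function representation, the min--max interchange (justified by compactness of the $\ell_\infty$-ball), the inner minimization giving $x(z)=v-B^Tz$, and the observation that $B^Tz_{\lambda_2}(Bv)$ is well-defined even when the dual optimizer is not unique, together constitute a complete proof. The paper itself states this lemma without proof, importing it as Lemma 1 from Li, Sun and Toh (2018), and your Fenchel/minimax argument is essentially the standard derivation given in that reference, so there is nothing to reconcile.
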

Given $v\in \Re^L$, consider the following sets:
$$
\begin{array}{rl}
\mathcal{I}_{z}(v)&:=\left\{i\,|\,\, |(z_{\lambda_2}(B v))_i|=\lambda_2,\,\,i=1,2,\dots,L-1\right\},\\[3pt]
\mathcal{K}_z(v)  &:=\left\{K\subseteq\{1,2,\ldots,L-1\}\,|\,{\rm supp}(B x_{\lambda_2}(v))\subseteq K\subseteq \mathcal{I}_{z}(v) \right\}.
\end{array}
$$
Define the multifunction $\mathcal{Q}_z:\Re^L \rightrightarrows \Re^{(L-1)\times (L-1)}$ by
$$
\mathcal{Q}_{z}(v):=\left\{\widehat{Q}\in\Re^{(L-1)\times(L-1)}\,|\,
\widehat{Q}=(\Sigma_K BB^T \Sigma_K)^{\dag},\,K\in\mathcal{K}_z(v) \right\},
$$
where $\Sigma_K={\rm Diag}(\sigma_K)\in\Re^{(L-1)\times(L-1)}$ with
$ (\sigma_K)_{i} = 0$, if $i\in K$, $ (\sigma_K)_{i} = 1$, if $i\notin K$.
Define also
the multifunction $\mathcal{Q}_x:\,\Re^L \rightrightarrows \Re^{L\times L}$ by
$$
\mathcal{Q}_{x}(v):=\left\{Q\in\Re^{L\times L}\,|\,
Q=I-B^T\widehat{Q}B,\,\,\widehat{Q}\in\mathcal{Q}_{z}(v)\right\}.
$$
It has been shown by \cite{li2018efficiently} that
the surrogate generalized Jacobian of ${\rm Prox}_{\varphi}$ at $v$ is the set
$$
\widehat{\partial}{\rm Prox}_{\varphi}(v):=\left\{ M\in\S_+^L\,|\,M = \Upsilon Q,\,\,\Upsilon\in\partial_B{\rm Prox}_{\lambda_1\|\cdot\|_1}(x_{\lambda_2}(v)),\,\,Q\in\mathcal{Q}_{x}(v)\right\},
$$
where $\partial_B{\rm Prox}_{\lambda_1\|\cdot\|_1} $ denotes the B-subdifferential of ${\rm Prox}_{\lambda_1\|\cdot\|_1}$ \cite[Equation (2.12)]{qi1993convergence}.

\section{Implementation of ADMM}
In this part, we briefly describe the alternating direction method of multipliers (ADMM) for solving the dual problem of the FGL problem:
\begin{equation*}
\begin{array}{cl}
\max\limits_{X} & \displaystyle\sum^L_{l=1} \left(\log \det \,X^{(l)}+ p\right)- \mathcal{P}^*(X-S).
\end{array}
\end{equation*}
This can be rewritten equivalently as follows:
\begin{equation}\label{model-ADMM}
\begin{array}{cl}
\min\limits_{X,\,Z} & \displaystyle \left\{ \sum^L_{l=1} \left({\rm -log\,det}\, X^{(l)}\right)+ \mathcal{P}^*(Z) \,\Big|\, X-Z=S\right\}.
\end{array}
\end{equation}
The augmented Lagrangian function associated with \eqref{model-ADMM}, given  $\sigma>0$, is defined by
$$
\begin{array}{l}
\widehat{\mathcal{L}}_{\sigma} (X,Z,\Theta)=\sum^L_{l=1} \left({\rm -log\,det}\, X^{(l)}\right)+ \mathcal{P}^*(Z)+\langle X-Z-S,\,\Theta\rangle+\displaystyle\frac{\sigma}{2}\|X-Z-S\|^2.
\end{array}
$$
The KKT optimality conditions are as follows:
\begin{equation}\label{def-KKT-MGL-D}
\left\{\begin{array}{l}
\Theta-{\rm Prox}_{\mathcal{P}}(\Theta+Z)=0,\\[2mm]
 X - Z - S=0,\\[2mm]
\Omega^{(l)}-{\rm Prox}_{\vartheta}(\Omega^{(l)}-X^{(l)})=0,\,\,l=1,2,\ldots,L,
\end{array}\right.
\end{equation}
where $\vartheta(X)=-\log\det\,X$ if $X\in\mathbb{S}^p_{++}$ and $\vartheta(X)=+\infty$ otherwise.
Due to its separable structure in terms of the variables $X$ and $Z$, ADMM is often considered as a natural choice for solving \eqref{model-ADMM}. The classic ADMM was first proposed by \cite{glowinski1975approximation,gabay1976dual}, and later extended by \cite{fazel2013hankel,chen2017efficient}. The iteration scheme of ADMM for \eqref{model-ADMM} can be described as follows: given $\tau\in(0,(1+\sqrt{5})/2)$, and an initial point $(X^0,Z^0,\Theta^0)$, the $(k+1)$-th iteration is given by
\begin{eqnarray}
X^{k+1}&=&\arg\min_{X}~\widehat{\mathcal{L}}_{\sigma} (X,Z^k,\Theta^k)\nonumber\\[-2mm]
Z^{k+1}&=&\arg\min_{Z}~\widehat{\mathcal{L}}_{\sigma} (X^{k+1},Z,\Theta^k)
\;=\;
(X^{k+1}+ \Theta^k/\sigma-S)-{\rm Prox}_{\mathcal{P}}(X^{k+1}+ \Theta^k/\sigma -S),
\nonumber
\\[0mm]
\Theta^{k+1}&=&\Theta^k + \tau\sigma(X^{k+1}-Z^{k+1}-S).
\nonumber
\end{eqnarray}
Here, $X^{k+1} = ((X^{(1)})^{k+1},\dots,(X^{(L)})^{k+1})$ can be updated by
$$
\begin{array}{l}
(X^{(l)})^{k+1} = \phi^+_{{\sigma}^{-1}}\left((Z^{(l)})^{k}-(\Theta^{(l)})^k/\sigma +S^{(l)} \right),\,\,l = 1,2,\ldots,L.
\end{array}
$$
In our implementation, we tune the parameter $\sigma$ wisely according to the progress of primal and dual feasibilities \cite[Section 4.4]{lam2018fast}. We also use a larger step-length $\tau$ of $1.618$, which has been demonstrated in various works to perform better than the simple case with $\tau =1$.
Based on the KKT optimality conditions \eqref{def-KKT-MGL-D},
the accuracy of an approximate optimal solution $(\Theta,X,Z)$ generated by ADMM is measured by the following
relative residual:
$$
\begin{array}{l}
\eta_A:=\max \left\{
\frac{\|\Theta-{\rm Prox}_{\mathcal{P}}(\Theta + Z)\|}{1+\|\Theta\|},\,
\frac{\|X - Z- S \|}{1+\|S \|},\,
\underset{1 \leq l \leq L}{\max}\Big\{ \frac{\|\Theta^{(l)} X^{(l)}-I\|}{1+\sqrt{p}} \Big\}
\right\}.
\end{array}
$$
Thus far, we have provided an easily implementable framework of ADMM for which each iteration requires
the computation of the proximal mapping of the log-determinant function and that of the FGL regularizer.

\section{Numerical Experiment: University Webpages}\label{sec-web}
This section presents the procedure of processing the {\tt university webpages} data
(available at \url{http://ana.cachopo.org/datasets-for-single-label-text-categorization}) and generating sample covariance matrices which is similar to the process used by \cite{guo2011joint}. Actually, the previous work \citep{guo2011joint} on  the data set {\it Webtest} applied a different penalty term to estimate multiple graphical models jointly.
For given integer $p$, the sample covariance matrices $S^{(l)},\,l=1,2,3,4$ were constructed from the data set {\it{Webtest}} in the following way:

\begin{itemize}[topsep=1pt,itemsep=-.6ex,partopsep=1ex,parsep=1ex,leftmargin=4ex]
\item[(i)] Choose $p$ words with highest frequency which appear in each class
at least once. Namely, the words we analyse are a subset of all involved words.

\item[(ii)] Obtain $X^{(1)}\in\Re^{544\times p}$ from class Student, where the $(i,j)$-th element $X^{(1)}_{ij}$ denotes the number of times the $j$-th term appears in the $i$-th page of class Student. In the same way, $X^{(2)}\in\Re^{374\times p}$, $X^{(3)}\in\Re^{310\times p}$, and $X^{(4)}\in\Re^{168\times p}$ can be obtained from class Faculty, Course, and Project, respectively. Denote their vertical concatenation by a new matrix $ X = [X^{(1)}; X^{(2)}; X^{(3)}; X^{(4)}]
\in \Re^{1396\times p}$.

\item[(iii)] The matrix $P$ is obtained by normalizing $X$ along each column: $P_{ij} = X_{ij}/\sum_i X_{ij}$. Then, the log-entropy weight of the $j$-th word is defined as $e_j = 1 + \sum_i P_{ij}(\ln P_{ij})/\ln 1396$.

\item[(iv)] Compute $\overline{X}$ as follows: $\overline{X}_{ij} = e_j \ln(1+X_{ij})$, and split $\overline{X}$ by columns accordingly: $ \overline{X} = [\overline{X}^{(1)}; \overline{X}^{(2)}; \overline{X}^{(3)}; \overline{X}^{(4)}]$.

\item[(v)] Generate sample covariance matrices $S^{(l)}$ from $\overline{X}^{(l)}$: $S^{(1)} = {\rm cov}(\overline{X}^{(1)}),\,l=1,2,3,4$.
\end{itemize}
Following the procedure described above, we can also generate sample covariance matrices from the data set {\it Webtrain}.

In section 4.3 of the paper, we have successfully demonstrated the effectiveness of the FGL model for exploring the similarity across related classes (Figure 3). On the other hand, we believe that the model can also detect the heterogeneity among different classes. As an example, supplementary Figures~\ref{figsub-course} and \ref{figsub-project} illustrate the differences between the Course (Figure~\ref{figsub-course}) and Project (Figure~\ref{figsub-project})  classes. One can see that some course related terms, {such as} class and assign, are of high degree in supplementary Figure~\ref{figsub-course}; whereas they are not even  connected in supplementary Figure~\ref{figsub-project}.
Besides, some teaching related terms are linked only in the Course category, such as class-assign, assign-problem, class-project. Overall, it is likely that the FGL model is capable of identifying the common and individual  structures of the webpages among related classes.
\begin{figure}[H]
\centering
\includegraphics[width=0.60\textwidth]{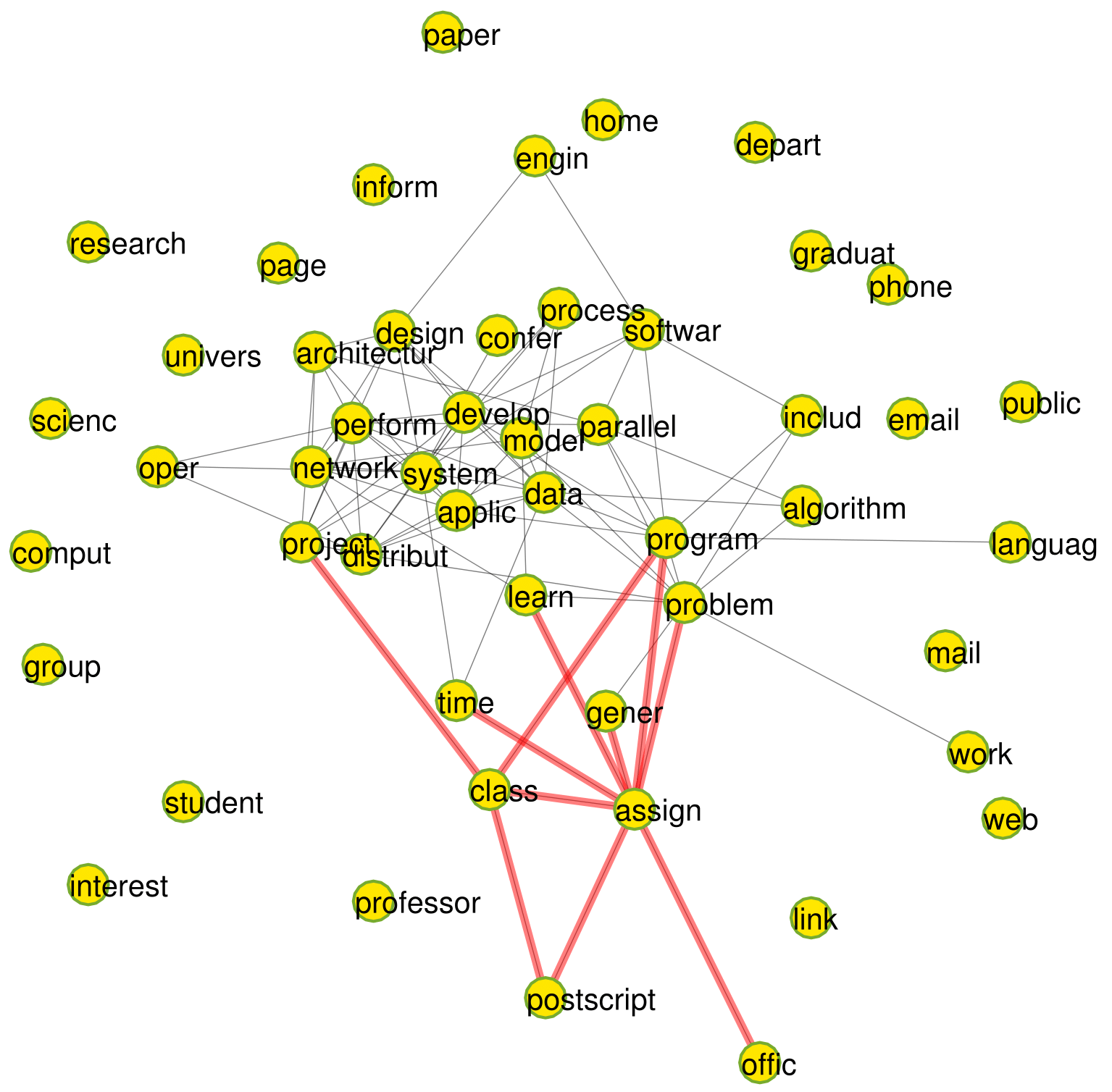}
  \caption{\small{Dependency structure for class Course. The thin black lines are the edges appearing in both classes, and the thick red lines are the edges only appearing in one class.}}
  \label{figsub-course}
\end{figure}

Supplementary Figure~\ref{Fig-webFused} presents the performance profiles of rPPA, ADMM, and MGL  for all 18 tested problems.  The meaning of the performance profiles is given as follows: a point $(x,y)$ is on the performance curve of a particular method if and only if this method can solve up to desired accuracy $(100y)\%$ of all the tested instances within at most $x$ times of the fastest
method for each instance. As can be seen,  rPPA outperforms ADMM and MGL by a large margin for the most of the tested webpages data sets. In particular, focusing on $y = 40\%$, we can see  that rPPA is around $3\sim 5$ times faster in comparison with ADMM and MGL for over $60\%$ of the tested instances.
\begin{figure}[H]
\centering
\includegraphics[width=0.60\textwidth]{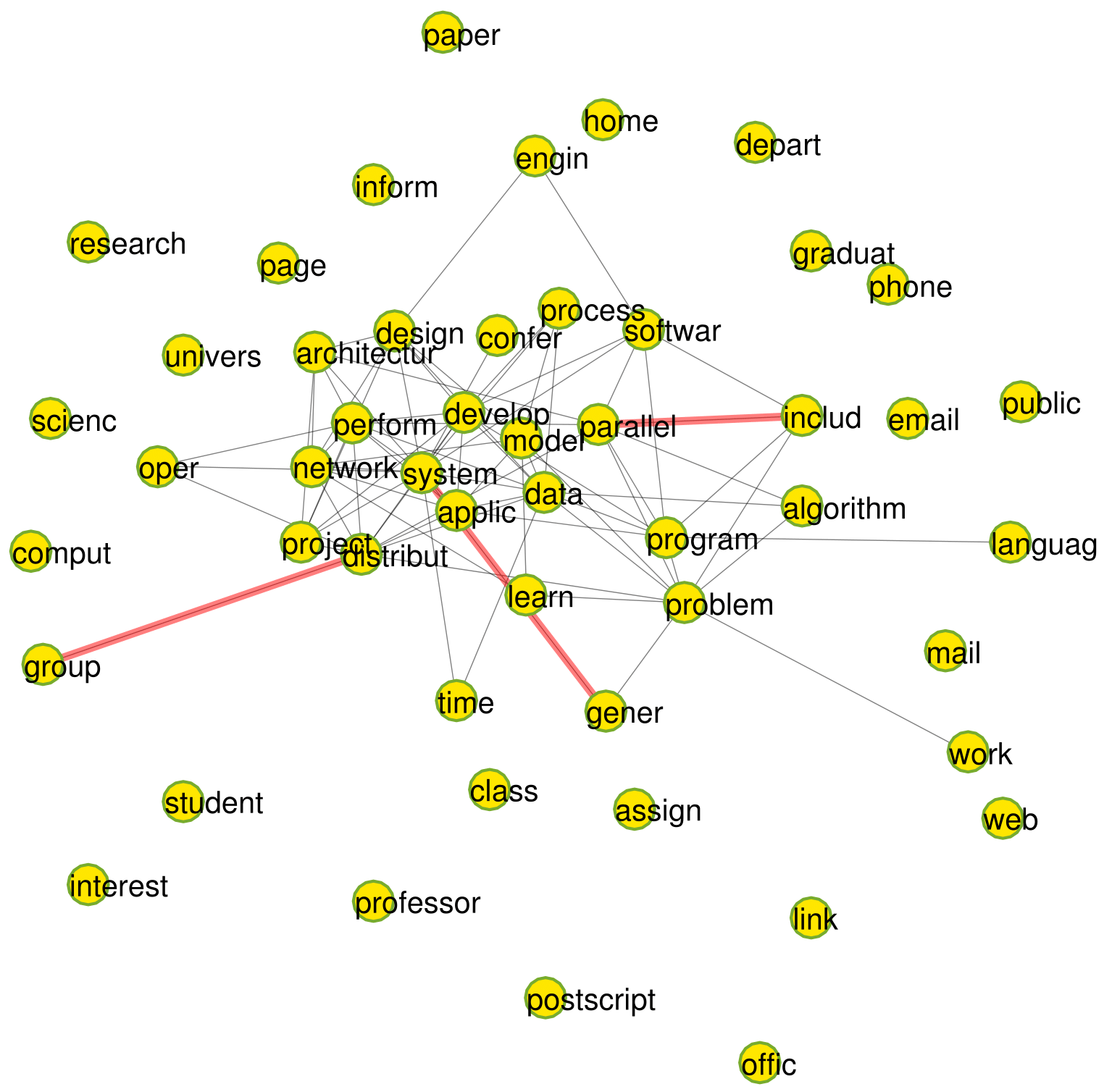}
  \caption{\small{Dependency structure for class Project. The thin black lines are the edges appearing in both classes, and the thick red lines are the edges only appearing in one class.}}
  \label{figsub-project}
\end{figure}
\begin{figure}[H]
	\centering
\includegraphics[width=0.5\textwidth]{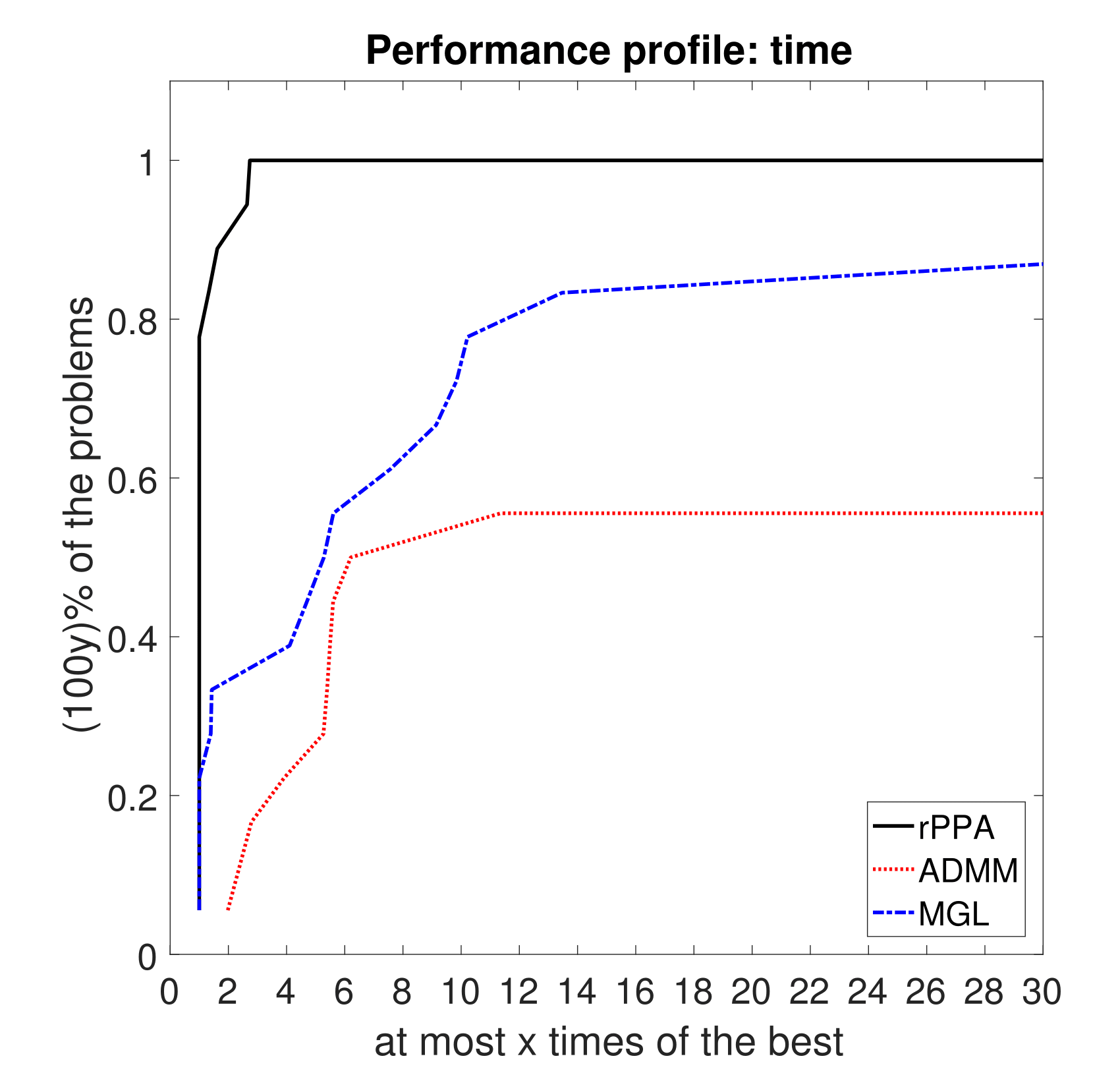}
\caption{\small{Performance profiles of the rPPA, ADMM, and MGL on university webpages data sets.}\label{Fig-webFused}}
\end{figure}

\section{Numerical Experiment: 20 Newsgroups}
This section compares rPPA, ADMM, and MGL on newsgroups data set, which is a popular text data set. The {\tt 20 newsgroups} data set is a collection of newsgroup documents, partitioned nearly evenly across $20$ different newsgroups. Different newsgroups correspond to different topics, and some of the newsgroups are closely related to each other (e.g., comp.sys.ibm.pc.hardware/comp.sys.mac.hardware), while others are highly unrelated (e.g., misc.forsale/soc.religion.christian). Supplementary Table~\ref{table-20ng} lists the 20 newsgroups (from the website:  {\tt{http://qwone.com/\~{}jason/20Newsgroups/}}), partitioned according to subject matter. According to the topics, our numerical experiments were conducted on the four subgroups, which are likely to possess common semantic structures. The four subgroups are highlighted in supplementary Table~\ref{table-20ng} and named as {\it NGcomp}, {\it NGrec}, {\it NGsci}, and {\it NGtalk} accordingly.
There are several classes in each subgroup, and we apply the FGL model to estimating jointly the precision matrices of different classes in each subgroup.
\begin{table}[!ht]\centering
\caption{\small{Partition of 20 newsgroups by topics}}\label{table-20ng}
{\scriptsize\begin{tabular}{|l|l|l|}
 \hline
 \bf{comp.graphics}             & \bf{rec.autos}          & \bf{sci.crypt} \\
 \bf{comp.os.ms-windows.misc}   & \bf{rec.motorcycles}    & \bf{sci.electronics} \\
 \bf{comp.sys.ibm.pc.hardware}  & \bf{rec.sport.baseball} & \bf{sci.med} \\
 \bf{comp.sys.mac.hardware}     & \bf{rec.sport.hockey}	  & \bf{sci.space} \\
 \bf{comp.windows.x} & & \\
 \hline
              & \bf{talk.politics.misc}    & talk.religion.misc \\
 misc.forsale & \bf{talk.politics.guns}    & alt.atheism \\
              & \bf{talk.politics.mideast} & soc.religion.christian \\
 \hline
\end{tabular}}
\end{table}

A processed version of the 20 newsgroups data set which is easy to read into {\sc Matlab} can be downloaded from Jason's page {\tt{http://qwone.com/\~{}jason/20Newsgroups/}}, and the downloaded data contains a training data set and a testing data set. We also adopted the procedure of generating sample covariance matrices described in the previous section \ref{sec-web} with a series of problem dimensionality $p=100$, $p=200$, and $p=300$.
\begin{table} [!ht]\centering
\caption{\small{Performances of rPPA, ADMM, and MGL  on  newsgroups data.  Tolerance $\varepsilon =$ 1e-6.}}\label{table-ng300}
{\scriptsize\begin{tabular}{lllccccccccc}
\toprule
Problem & $(\lambda_1,\lambda_2)$ & Density & \multicolumn{3}{c}{Iteration} & \multicolumn{3}{c}{Time} & \multicolumn{3}{c}{Error} \\
\cmidrule(l){4-6} \cmidrule(l){7-9} \cmidrule(l){10-12}
 $(n,L)$ & &  & P & A & M & P & A & M & P & A & M\\
\midrule
 {\it NGcomp}          &          (5e-03,5e-04)  & 0.020  & 19  & 4201  & 35  & {\color{red} 53}  & 05:27  & 41:26  & 5.9e-07  & 9.2e-07  & 6.2e-07 \\
 test                  &          (1e-03,1e-04)  & 0.094  & 16  & 1543  & 720  & {\color{red} 01:20}  & 02:08  & 01:54:03  & 8.8e-07  & 1.0e-06  & 1.0e-06 \\
 (300,5)               &          (5e-04,5e-05)  & 0.194  & 17  & 1391  & 1240  & {\color{red} 59}  & 01:56  & 03:00:00  & 7.9e-07  & 1.0e-06  & 9.6e-06 \\
&&&&&&&&&&&\\[-0.2cm]
 {\it NGrec}           &          (5e-03,5e-04)  & 0.004  & 25  & 20000  & 4  & {\color{red} 01:16}  & 21:18  & 04:43  & 7.1e-07  & 8.4e-06  & 8.3e-07 \\
 test                  &          (1e-03,1e-04)  & 0.061  & 25  & 20000  & 13  & {\color{red} 01:20}  & 22:10  & 04:38  & 6.3e-07  & 8.4e-06  & 4.6e-07 \\
 (300,4)               &          (5e-04,5e-05)  & 0.134  & 24  & 20000  & 37  & {\color{red} 01:23}  & 22:05  & 07:55  & 9.8e-07  & 8.4e-06  & 1.9e-06 \\
&&&&&&&&&&&\\[-0.2cm]
 {\it NGsci}           &          (5e-03,5e-04)  & 0.006  & 22  & 16244  & 6  & {\color{red} 57}  & 15:42  & 06:00  & 6.0e-07  & 1.0e-06  & 2.7e-07 \\
 test                  &          (1e-03,1e-04)  & 0.074  & 21  & 16230  & 25  & {\color{red} 01:06}  & 17:54  & 10:22  & 7.9e-07  & 1.0e-06  & 1.3e-06 \\
 (300,4)               &          (5e-04,5e-05)  & 0.156  & 21  & 16230  & 100  & {\color{red} 01:06}  & 17:52  & 20:37  & 7.3e-07  & 1.0e-06  & 1.5e-06 \\
&&&&&&&&&&&\\[-0.2cm]
 {\it NGtalk}          &          (5e-03,5e-04)  & 0.026  & 17  & 4179  & 14  & {\color{red} 52}  & 04:23  & 14:33  & 7.4e-07  & 1.0e-06  & 2.7e-07 \\
 test                  &          (1e-03,1e-04)  & 0.111  & 17  & 1018  & 79  & {\color{red} 35}  & 01:08  & 16:52  & 4.9e-07  & 1.0e-06  & 9.8e-07 \\
 (300,3)               &          (5e-04,5e-05)  & 0.228  & 17  & 922  & 434  & {\color{red} 37}  & 01:00  & 52:12  & 6.8e-07  & 1.0e-06  & 9.9e-07 \\
&&&&&&&&&&&\\[-0.2cm]
 {\it NGcomp}          &          (5e-03,5e-04)  & 0.016  & 20  & 6023  & 13  & {\color{red} 36}  & 07:36  & 26:32  & 8.6e-07  & 1.0e-06  & 5.2e-07 \\
 train                 &          (1e-03,1e-04)  & 0.077  & 20  & 6393  & 153  & {\color{red} 40}  & 08:59  & 36:18  & 6.0e-07  & 1.0e-06  & 9.7e-07 \\
 (300,5)               &          (5e-04,5e-05)  & 0.142  & 19  & 5861  & 662  & {\color{red} 52}  & 07:58  & 01:52:34  & 8.3e-07  & 1.0e-06  & 1.1e-06 \\
&&&&&&&&&&&\\[-0.2cm]
 {\it NGrec}           &          (5e-03,5e-04)  & 0.004  & 26  & 8842  & 5  & {\color{red} 01:30}  & 07:17  & 03:34  & 6.4e-07  & 1.0e-06  & 4.7e-07 \\
 train                 &          (1e-03,1e-04)  & 0.067  & 24  & 8737  & 17  & {\color{red} 01:58}  & 09:31  & 06:32  & 7.6e-07  & 1.0e-06  & 1.7e-06 \\
 (300,4)               &          (5e-04,5e-05)  & 0.119  & 24  & 8625  & 66  & {\color{red} 02:05}  & 09:30  & 14:16  & 7.1e-07  & 1.0e-06  & 2.1e-06 \\
&&&&&&&&&&&\\[-0.2cm]
 {\it NGsci}           &          (5e-03,5e-04)  & 0.011  & 21  & 11166  & 10  & {\color{red} 41}  & 11:15  & 11:32  & 7.9e-07  & 1.0e-06  & 2.1e-08 \\
 train                 &          (1e-03,1e-04)  & 0.085  & 20  & 11137  & 41  & {\color{red} 01:03}  & 12:17  & 13:45  & 9.4e-07  & 1.0e-06  & 1.5e-06 \\
 (300,4)               &          (5e-04,5e-05)  & 0.146  & 20  & 11405  & 226  & {\color{red} 01:02}  & 12:41  & 28:27  & 9.1e-07  & 1.0e-06  & 1.6e-06 \\
&&&&&&&&&&&\\[-0.2cm]
 {\it NGtalk}          &          (5e-03,5e-04)  & 0.026  & 22  & 20000  & 12  & {\color{red} 01:29}  & 21:04  & 13:18  & 9.4e-07  & 2.0e-06  & 1.4e-06 \\
 train                 &          (1e-03,1e-04)  & 0.101  & 22  & 20000  & 74  & {\color{red} 41}  & 22:10  & 12:00  & 6.5e-07  & 1.7e-06  & 1.6e-06 \\
 (300,3)               &          (5e-04,5e-05)  & 0.193  & 22  & 20000  & 402  & {\color{red} 40}  & 23:15  & 45:12  & 6.2e-07  & 1.6e-06  & 1.6e-06 \\
\bottomrule
\end{tabular}}
\end{table}

Supplementary Table~\ref{table-ng300}  shows the comparison of rPPA, ADMM, and MGL  on the testing and training data sets of four subgroups with parameters $p=300$. The results for $p=200$ and $p=100$ are not shown by tables here for lack of space. Instead, we summarize all conducted instances (with different dimensionality $p=100$, $p=200$, and $p=300$, with various tuning parameters $(\lambda_1,\lambda_2)$) in Supplementary Figure~\ref{Fig-NG20-profile}. One can clearly see that rPPA outperforms ADMM and MGL by an obvious margin. It truly suggests that our proposed algorithm is efficient for solving the FGL problems.
\begin{figure}[!ht]
\centering
\centering
\includegraphics[width=0.5\textwidth]{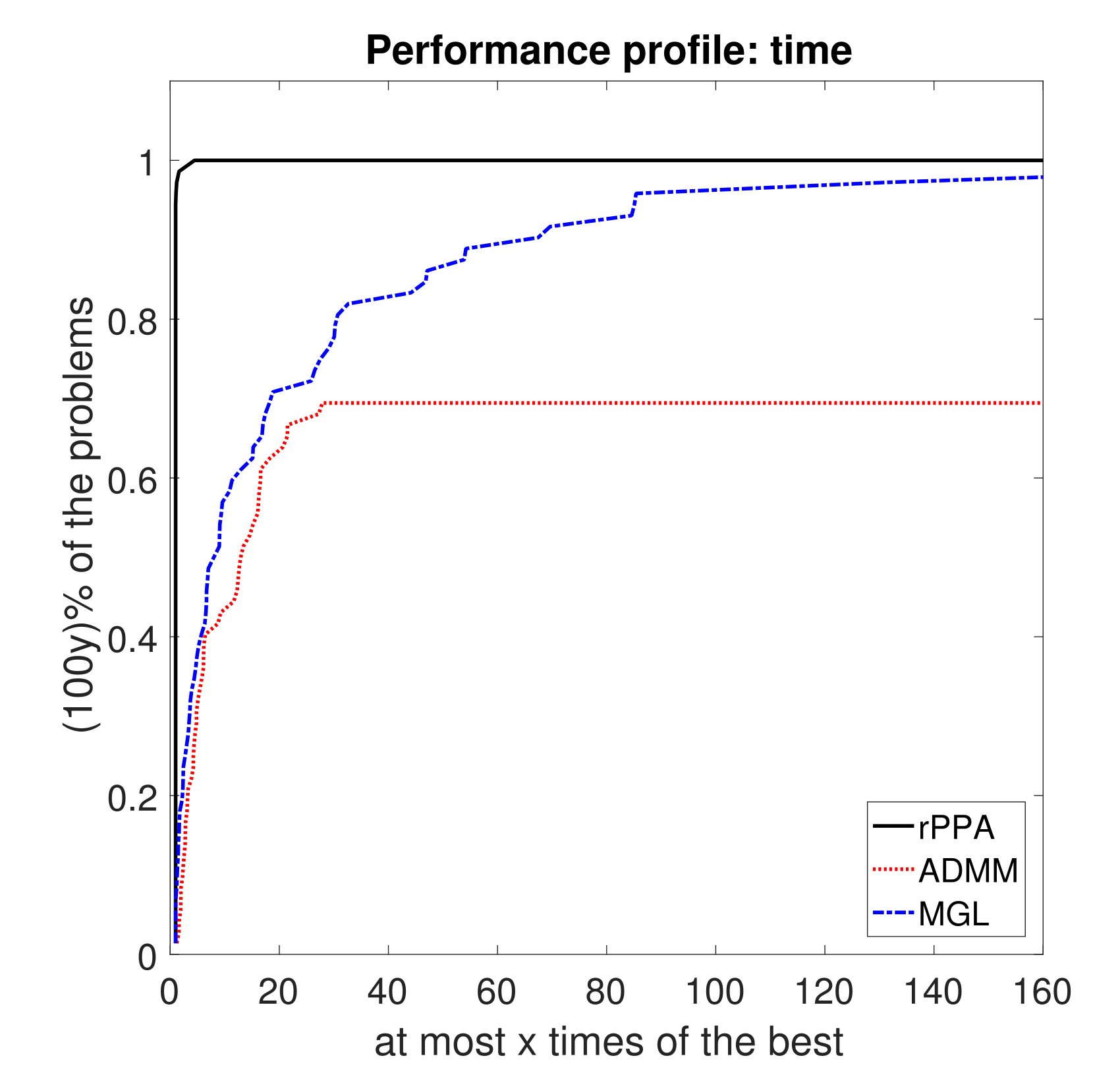}
\caption{\small{Performance profiles of rPPA, ADMM, and MGL on  newsgroups data sets with $p=100$, $p=200$, and $p=300$.}}\label{Fig-NG20-profile}
\end{figure}
\end{appendices}
\bibliographystyle{Chicago}

\end{document}